\documentclass{amsart}
\usepackage{graphicx}
\usepackage{amsmath,amssymb}
\usepackage{amsthm}
\usepackage{wrapfig}
\usepackage{enumitem}
\usepackage[hang,small,bf]{caption}
\usepackage[subrefformat=parens]{subcaption}
\usepackage{listings}
\usepackage{ascmac}
\usepackage{mathrsfs}
\usepackage{cite}
\usepackage[legacycolonsymbols]{mathtools}
\usepackage{bm}
\usepackage{comment}

\usepackage{color}

\theoremstyle{plain}
\newtheorem{thm}{Theorem}
\newtheorem*{thm*}{Theorem}

\newtheorem*{prop*}{Proposition}
\newtheorem{lem}{Lemma}
\newtheorem*{lem*}{Lemma}
\newtheorem{exm}{Example}
\newtheorem*{exm*}{Example}

\newtheorem*{pbm*}{Problem}
\newtheorem{rem}{Remark}
\newtheorem*{rem*}{Remark}

\newcommand{\R}{{\mathbb R}}
\newcommand{\Q}{{\mathbb Q}}
\newcommand{\Z}{{\mathbb Z}}
\newcommand{\C}{{\mathbb C}}
\newcommand{\N}{{\mathbb N}}
\newcommand{\K}{{\mathcal K}}
\newcommand{\J}{{\mathcal J}}

\bmdefine{\ba}{a}
\bmdefine{\bb}{b}
\bmdefine{\be}{e}
\bmdefine{\bE}{E}
\bmdefine{\bh}{h}
\bmdefine{\bg}{g}
\bmdefine{\bk}{k}
\bmdefine{\bp}{p}
\bmdefine{\bx}{x}
\bmdefine{\by}{y}
\bmdefine{\bs}{s}
\bmdefine{\bt}{t}
\bmdefine{\bu}{u}
\bmdefine{\bv}{v}
\bmdefine{\bw}{w}

\newcommand{\un}{\underline}
\newcommand{\wi}{\widetilde}

\theoremstyle{definition}

\allowdisplaybreaks

\newcommand{\relmiddle}[1]{\mathrel{}\middle#1\mathrel{}}

\subjclass[2020]{Primary 11J91; Secondary 11K16, 11A63}

\keywords{Algebraic independence, power series, Pisot and Salem numbers}

\begin{document}
\title[Algebraic independence of non-lacunary power series]{An extension of algebraic independence of special values for non-lacunary power series}
\author{Hajime Kaneko, Satoru Oshima, Takafumi Tsurumaki}
\date{}
\begin{abstract}
We study the algebraic independence of special values of power series $f(\beta^{-1})$, where $\beta$ is a fixed Pisot or Salem number. 
In particular, we consider the case where $f(X)=\sum_{n\geq 0} t(n) X^{w(n)}$ is not a lacunary series and is not assumed to satisfy any special functional equation, such as a Mahler-type functional equation. In our main results, we give a new criterion of the algebraic independence of three values. Applying our main results, we prove that the following three values are algebraically independent: 
    \[
    \sum_{n=3}^{\infty}\lfloor n^{y}\rfloor\beta^{-\lfloor n^{\log \log n}\rfloor},\quad
    \sum_{n=3}^{\infty}\beta^{-\lfloor n^{\log \log n}\rfloor},\quad 
    \sum_{n=1}^{\infty}\beta^{-\lfloor n^{\log n}\rfloor},
    \]
where $y$ is an arbitrary positive real number. Since our criterion is flexible, we have considerable freedom in choosing the coefficients $(t(n))_{n\geq 0}$ and the exponents $(w(n))_{n\geq 0}$.
\end{abstract}
\maketitle

\section{Introduction}
Throughout this paper, let $\N$ denote the set of nonnegative integers and let $\Z_{>0}$ denote the set of positive integers. For a real number $x$, we denote the integral part of $x$ by $\lfloor x \rfloor$. We use the Landau symbols $o,O$ and the Vinogradov symbols $\ll,\gg$ in the usual sense: for two functions $f$ and $g$, the assertions $f\ll g$, $g\gg f$, and $f=O(g)$ mean that there exists a positive constant $C$ such that $|f|\le C|g|$. If a parameter $a$ is involved, then $f\ll_a g$ and $g\gg_a f$ mean that the implicit constant may depend only on $a$. Finally, $f=o(g)$ means that $f/g$ tends to zero.

Throughout this section, we assume that $\bw=(w(m))_{m=0}^{\infty}$ is an eventually strictly sequence of nonnegative integers and that $\alpha\in \C$ is an algebraic number with $0<|\alpha|<1$.  
We survey known results on the arithmetic properties of the values $f(\bw;\alpha)$, where  $f(\bw;X)$ is an infinite series defined by
\[
f(\bw;X)=\sum_{m=0}^{\infty}X^{w(m)}.
\]
After introducing results on the transcendence of the values, we review results on the algebraic independence. 

Bugeaud \cite{MR2378397} posed the problem of whether sufficiently rapid growth of $(w(m))_{m=0}^{\infty}$ forces the value $f(\bw;\alpha)$ to be transcendental.
We call $f(\bw;X)$ a gap series if
\[
\lim_{m\to\infty}\frac{w(m+1)}{w(m)}=\infty,
\]
and a lacunary series if
\[
\liminf_{m\to\infty}\frac{w(m+1)}{w(m)}>1.
\]
Using the $p$-adic Subspace Theorem, Corvaja and Zannier \cite{CorvajaZannier2002} proved that if $f(\bw;X)$ is a lacunary series, then $f(\bw;\alpha)$ is transcendental. For instance, let $x,y$ be real numbers with $x>0$ and $y>1$. Then these two numbers
\[
\sum_{m=0}^{\infty}\alpha^{\lfloor x(m!)\rfloor}
\qquad\text{and}\qquad
\sum_{m=0}^{\infty}\alpha^{\lfloor y^m\rfloor}
\]
are transcendental.

Adamczewski \cite{MR2057021} improved the results above in the case of $\alpha=\beta^{-1}$, where $\beta$ is a Pisot or Salem number. Recall that a Pisot number is an algebraic integer greater than $1$ whose conjugates except itself have absolute value less than $1$. In particular, any integer greater than $1$ is a Pisot number. Moreover, a Salem number is an algebraic integer greater than $1$ whose conjugates except itself have absolute value at most $1$, and at least one conjugate has absolute value $1$. 
Adamczewski \cite{MR2057021} proved that if $(w(m))_{m=0}^{\infty}$ satisfies 
\begin{align}\label{int_ada}
\limsup_{m\to\infty}\frac{w(m+1)}{w(m)}>1,
\end{align}
then, for every Pisot or Salem number $\beta$, the value $f(\bw;\beta^{-1})$ is transcendental.


On the other hand, Mahler \cite{Mahler1929} initiated the method, called Mahler's method, to prove the transcendence of the values of analytic functions satisfying certain functional equations. For instance, the following functions satisfy Mahler type equation: 
\[
\sum_{n=0}^{\infty}z^{k^n}, \quad \prod_{n=0}^{\infty}\left(1-z^{k^n}\right),
\]
where $k$ is an integer greater than 1. \par
On the other hand, the main interest of this paper is the arithmetical properties of real numbers whose $\beta$-representations (infinite sums involving $\beta^{-n}$) in the case where $\beta$ is a Pisot or Salem number. 
In particular, we consider the case where the series do not necessarily satisfy Mahler-type functional equations and do not even satisfy (\ref{int_ada}). 

A fundamental starting point in an integer base $b \ (\geq2)$ is the theorem by Adamczewski and Bugeaud \cite{AdamczewskiBugeaud2007}. They proved that the complexity function $p(b,\xi;n)$ in the $b$-ary expansion of each irrational algebraic number satisfies 
\[\liminf_{n\to\infty}\frac{p(b,\xi;n)}{n}=\infty.\]
As a corollary, any irrational automatic numbers are proved to be transcendental. This result shows that any irrational algebraic numbers cannot have expansions that are too regular.
Analogous results are shown in \cite{AdamczewskiBugeaud2007} for the $\beta$-expansion of algebraic number $\xi\in (0,1)\backslash\Q(\beta)$. 
Thus, the arithmetic study of values of power series naturally extends from ordinary integer bases to Pisot and Salem bases. 
The proof relies on repetitive property of the sequence of coefficients. 
Note that Adamczewski, Bugeaud, and Luca \cite{ABL2008} introduced the notion of a stammering function, which generalizes the repetitive property in \cite{AdamczewskiBugeaud2007}, and obtain partial results for the transcendence at general algebraic points. 

We consider the case where the series are relatively sparse. 
For the proof of our main results, we investigate the sets of nonzero digits in the products of power series, which is mainly denoted in terms of Minkowski sum. We first consider the case where $\beta$ is an integer $b$ greater than 1. 
Erd\H{o}s \cite{1957Erdos} proved for a positive integer $R$ that if 
\begin{align}\label{eqn:erd}
    \limsup_{m\to\infty}\frac{w(m)}{m^R}=\infty,
\end{align}
then $f(\bw;b^{-1})=\sum_{m=0}^{\infty}b^{-w(m)}$ is not an algebraic number of degree at most $R$. In particular, if the above relation holds for any positive integer $R$, then $f(\bw;b^{-1})$ is transcendental, which was proved by Erd\H{o}s and Straus \cite{1957Erdos} with a slightly different method. The result above was rediscovered by Bailey, Borwein, Crandall and Pomerance \cite{2004BBCL}.

Next, we consider the case where $\beta$ is a general Pisot or Salem number. 
By developing the method of Erd\H{o}s and Straus \cite{1957Erdos}, the transcendence result was proved by \cite{Kaneko2015}. 
More precisely, if (\ref{eqn:erd}) holds for any positive integer $R$, then $f(\bw; \beta^{-1})$ is transcendental. 
This result was improved by \cite{Kaneko2016} as follows: assume for a positive integer $R$ that 
\begin{align*}
    \limsup_{m\to\infty}w(m)\cdot\left(\frac{\log m}{m}\right)^R=\infty. 
\end{align*}
Then, 
    the degree of the field extension satisfies $[\Q(\beta,f(\bw;\beta^{-1})):\Q(\beta)]\geq R$, which includes the case where $f(\bw; \beta^{-1})$ is transcendental.

We now introduce known results of the algebraic independence of power series at a single algebraic point. 
Algebraic independence of the values of gap series were studied by many mathematicians. 
Typical criteria for the algebraic independence of gap series are studied in \cite{1982Shiokawa}. For instance, the continuum set 
\begin{align*}
    \left\{
    \left.
    \sum_{n=0}^{\infty}\alpha^{\lfloor x(n!)\rfloor}
    \ \right| \
    x\in \R, x>0
    \right\}
\end{align*}
is algebraically independent. 
For comparison, it is generally difficult to investigate the algebraic independence of the values of general lacunary series. 
Developing Mahler's method, Nishioka \cite{Nishioka1994} proved that the following set is algebraically independent: 
\begin{align*}
    \left\{
    \left. 
    \sum_{n=0}^{\infty} \alpha^{k^n}
    \ \right| \ k=2,3,\ldots
    \right\}
\end{align*}
Tanaka obtained two related generalizations. One concerns the series
\[
\sum_{n=0}^{\infty} \alpha^{\lfloor \rho k^n\rfloor},
\]
where \(\rho>0\) and $k$ is an integer greater than 1 \cite{Tanaka2004}. The other concerns the series $f(\bw;z)$, 
where $(w(n))_{n=0}^{\infty}$ is a linear recurrence sequence \cite{Tanaka1996}.

We now investigate the algebraic independence of the values of $f(\bw; \beta^{-1})=\sum_{n=0}^{\infty} \beta^{-w(n)}$, where $\beta$ is a Pisot or Salem number. 
We assume that $f(\bw;X)$ is relatively sparse and that $f(\bw;X)$ satisfies a sort of uniformity property of nonzero terms. 
In particular, we consider the case where (\ref{eqn:erd}) holds for any positive integer $R$. 
In the case where $\beta$ is an integer $b$ greater than 1, a criterion of algebraic independence was obtained by \cite{Kaneko2012}. 
This shows that algebraic independence can sometimes be obtained directly from the distribution of nonzero digits, without using any Mahler-type functional equation.
Note that the results above mainly treat the case of 
\[
\displaystyle \lim_{m\to\infty}\frac{w(m+1)}{w(m)}=1.
\]
Criterion for algebraic independence for a general Pisot or Salem number $\beta$ are obtained by \cite{Kaneko2019}.
For example, the two values
\[
\sum_{m=1}^{\infty}\beta^{-\lfloor m^{\log m}\rfloor}
\qquad\text{and}\qquad
\sum_{m=3}^{\infty}\beta^{-\lfloor m^{\log\log m}\rfloor},
\]
are algebraically independent. Note that we can show 
\begin{align*}
    &\lim_{m\to\infty}(m+1)^{\log (m+1)}\cdot m^{-\log m}=1,\\
    &\lim_{m\to\infty}(m+1)^{\log\log (m+1)}\cdot m^{-\log\log m}=1
\end{align*}
by the mean value theorem. 
On the other hand, the algebraic independence of the values for unbounded coefficients are also investigated in \cite{Kaneko2017}. 
For instance, the two values
\[
\sum_{m=1}^{\infty}\beta^{-\lfloor m^{\log m}\rfloor}
\qquad\text{and}\qquad
\sum_{m=1}^{\infty}m^r\beta^{-\lfloor m^{\log m}\rfloor},
\]
are algebraically independent, where $r$ is any positive integer.

The purpose of this paper is to combine the results above and prove a new algebraic independence criterion for three values of power series at Pisot or Salem bases. The main criterion is stated in Theorem \ref{mainThm}. As applications, it yields the following representative consequences.
For a real number $x$, let $\log^{+}x=\log\max\{1,x\}$. 
\begin{thm}\label{thm:section1}
    Let $\beta$ be a Pisot or Salem number. Let $(u_i(n))_{n=1}^{\infty}$ be sequences of positive integers for $i=1,2,3$ satisfying the following conditions:
    \begin{enumerate}[
  label={Condition (\arabic*)},
  leftmargin=*,
  align=left
]
        \item There exists a constant $C$ with $0<C<1$ such that, for each $i=1,2,3$,
        \[\log^{+}u_i(n)=o(n^{1-C})\]
        as $n$ tends to infinity.
        \item We have
        \[\lim_{n\to\infty}\frac{u_1(n)}{u_2(n)}=\infty.\]
    \end{enumerate}
    Then we have the following:\\
    (1) For any real numbers $x,y$ with $0<x<y$, the numbers
    \[
    \begin{gathered}
    \sum_{n=1}^{\infty}u_1(n)\beta^{-\lfloor\exp((\log n)^{1+x})\rfloor},\quad 
    \sum_{n=1}^{\infty}u_2(n)\beta^{-\lfloor\exp((\log n)^{1+x})\rfloor},\quad
    \sum_{n=1}^{\infty}u_3(n)\beta^{-\lfloor\exp((\log n)^{1+y})\rfloor}
    \end{gathered}
    \]
    are algebraically independent.\\
    (2) The numbers
    \[
    \begin{gathered}
    \sum_{n=3}^{\infty}u_1(n)\beta^{-\lfloor n^{\log \log n}\rfloor},\quad
    \sum_{n=3}^{\infty}u_2(n)\beta^{-\lfloor n^{\log \log n}\rfloor},\quad 
    \sum_{n=1}^{\infty}u_3(n)\beta^{-\lfloor n^{\log n}\rfloor}
    \end{gathered}
    \]
    are algebraically independent.\\
    (3) Let $r$ be a real number greater than 1. 
    Then the numbers
    \[
    \begin{gathered}
    \sum_{n=1}^{\infty}u_1(n)\beta^{-\lfloor n^{\log n}\rfloor},\quad
    \sum_{n=1}^{\infty}u_2(n)\beta^{-\lfloor n^{\log n}\rfloor},\quad 
    \sum_{n=1}^{\infty}u_3(n)\beta^{-\lfloor r^n\rfloor}
    \end{gathered}
    \]
    are algebraically independent.
    
\end{thm}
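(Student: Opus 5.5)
Theorem~\ref{thm:section1} is presented as an application of the main criterion, Theorem~\ref{mainThm}, which is stated later. So the plan is to reduce each of (1)--(3) to a check of that criterion's hypotheses. I expect the criterion to involve the following data. There are two series $f_1=\sum u_1(n)X^{w(n)}$ and $f_2=\sum u_2(n)X^{w(n)}$ sharing the same exponent sequence $\bw$, and a third series $f_3=\sum u_3(n)X^{v(n)}$ with a faster-growing exponent sequence $\bv$.

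The hypotheses should be of three kinds:
\begin{enumerate}
\item the coefficients are subexponential in a controlled way (Condition (1));
\item the ratio $u_1(n)/u_2(n)\to\infty$ (Condition (2)), which separates $f_1$ from $f_2$;
\item growth and counting conditions on $\bw$ and $\bv$.
\end{enumerate}
The conditions in (3) should say roughly this. The counting function $\lambda_{\bw}(N)=\#\{n: w(n)<N\}$ and its iterated Minkowski-sum analogues $\lambda_{R\bw}$ must be small enough compared with the gaps of $\bv$. That is, $\bv$ is ``sparser'' than any fixed number of sums of terms of $\bw$. In addition, $\bw$ must be non-lacunary but sparse, with $w(n+1)-w(n)\to\infty$ fast enough that the unbounded coefficients (of size $\exp(o(n^{1-C}))$) cannot cause carries across gaps.

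The key steps, in order, are as follows.

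First, for each pair $(\bw,\bv)$ I compute the inverse functions. In case (1), $w(n)=\lfloor\exp((\log n)^{1+x})\rfloor$ gives $\lambda_{\bw}(N)\asymp\exp((\log N)^{1/(1+x)})$. In case (2), $\lambda(N)=\exp((1+o(1))\log N/\log\log N)$ for $n^{\log\log n}$ and $\exp((\log N)^{1/2})$ for $n^{\log n}$. In case (3), $\lambda(N)\asymp\log N$ for $r^n$.

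Second, I verify that $\lambda_{\bw}(N)^R=N^{o(1)}$ for every fixed $R$, which gives the Erd\H{o}s-type condition~(\ref{eqn:erd}) for all $R$.

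Third, I check the gap conditions. By the mean value theorem, $w(n+1)-w(n)\asymp w(n)(\log n)^{x}/n$ in case (1), and analogously in the other cases. This quantity dominates $n^{1-C}$, and hence dominates $\log u_i(n)$ by Condition~(1), so each coefficient occupies a short block and nonzero digit blocks do not interact.

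Fourth, I check the relative sparseness between the first two series and the third. For any $R$, the quantity $\lambda_{\bw}(N)^R$ should be $o$ of the ratio governing how rarely $\bv$ has terms. For example, in (1), $\exp(R(\log N)^{1/(1+x)})$ against $\exp((\log N)^{1/(1+y)})$ uses $x<y$ in the right direction. This comparison needs care: the faster-growing exponent sequence is $\bv$, and its counting function is the smaller one. In (2) the comparison is between $\log N/\log\log N$ and $(\log N)^{1/2}$; in (3) it is between $(\log N)^{1/2}$ and $\log\log N$.

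Finally, Condition~(2) is used directly as the criterion's separating hypothesis for $f_1,f_2$. I expect it enters to rule out a nontrivial relation $P(f_1,f_2)=Q$ with $Q\in\Q(\beta)$ via the leading behaviour of coefficients.

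The main obstacle I anticipate is the fourth step. The criterion will likely require a precise inequality mixing $\lambda_{R\bw}$ with the gap structure of $\bv$ near a sequence of scales $N$. For very slow $\bw$ like $n^{\log\log n}$, the margin against $\exp(\sqrt{\log N})$ is only a power of $\log\log N$ in the exponent. Here I would first try choosing $N$ just below $v(m)$, where $v(m+1)/v(m)$ or $v(m+1)-v(m)$ is large. I would then show that the number of sums of $R$ terms of $\bw$ falling in a window of length $\asymp v(m)$ is still $\ll \exp(R\log N/\log\log N)=o(\text{required bound})$, uniformly in $R$ after taking $N$ large.
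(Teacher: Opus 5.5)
Your reduction to Theorem~\ref{mainThm} is the paper's route, with $S_1$ the common exponent set of the first two series and $S_2$ that of the third. Some of your checks are right: your Step~2 is Condition~(2), Condition~(2) of the statement is Condition~(5), and Condition~(4) follows from $\log^+u_i(n)=o(n^{1-C})$ because the exponent $m=w(n)$ satisfies $m\ge n$. However, the hypotheses you guessed are not the criterion's, and the two conditions that carry the content are never verified.

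\textbf{Condition~(1).} It requires the \emph{sparser} set $S_2$ to have bounded consecutive ratios: $[R,C_1R)\cap S_2\ne\emptyset$ for large $R$. This holds because $v(m+1)/v(m)$ tends to $1$ in (1) and (2), and to $r$ in (3). The proof needs it to place some $\theta_0\in\widetilde{S_2}$ in a prescribed part of the long gap $J_1$ of $W_1$. That gives $\rho(\bg;M)\ge1$, and hence $Y_R>0$ on $J_2$. Your plan for the main obstacle, working at scales where $v(m+1)/v(m)$ is large, points the opposite way; for $n^{\log n}$ that ratio tends to $1$.

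\textbf{Condition~(3).} This is a \emph{density} statement for sumsets of $S_1$: for all fixed $a_1,a_2$,
\[
R-\theta((1+a_1)S_1;R)<\frac{R}{\lambda(S_1;R)^{a_1}\lambda(S_2;R)^{a_2}}.
\]
Your Step~4 instead upper-bounds how many $R$-fold sums lie in a window, which is automatic from $\lambda^R$. Your Step~3 (individual gaps beating coefficient size, ``no carries'') plays no role. Coefficients enter only through Condition~(4), in the norm bound for $Z_R$ and the tail bounds for $Y_R$.

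\textbf{Closing the gap.} The paper defers (2) and (3) to \cite{Kaneko2012,Kaneko2019}; a direct check of (3) is a greedy argument. Put $\lambda_1=\lambda(S_1;R)$.
\begin{itemize}
\item In all three cases the gap of $S_1$ just below $R$ is at most $R\lambda_1^{-1+o(1)}$; for example, it is $R(1+x)(\log n)^x/n$ with $n\approx\lambda_1$ in (1).
\item For fixed $K$, $\lambda(S_1;R')\ge\lambda_1^{1-o(1)}$ whenever $R\lambda_1^{-K}\le R'\le R$.
\end{itemize}
Subtract, $1+a_1$ times, the largest element of $S_1$ below the current remainder, using that the gap function is monotone. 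Up to routine edge effects, this leaves a remainder at most $R\lambda_1^{-(1+a_1)+o(1)}$. That is below $R\lambda_1^{-a_1}\lambda(S_2;R)^{-a_2}$ precisely because $\log\lambda(S_2;R)=o(\log\lambda_1)$. Concretely, compare $(\log R)^{1/(1+y)}$ with $(\log R)^{1/(1+x)}$, $\sqrt{\log R}$ with $\log R/\log\log R$, and $\log\log R$ with $\sqrt{\log R}$.

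These are the comparisons you listed, but they are needed in the form $\lambda(S_2;R)^{a}=\lambda(S_1;R)^{o(1)}$ for every fixed $a$. As you wrote it for (1), comparing $\exp(R(\log N)^{1/(1+x)})$ against $\exp((\log N)^{1/(1+y)})$, the left side is the larger one, so the ``$o$'' you ask for is false.
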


\section{Algebraic independence of three real numbers}
In this section, we investigate the algebraic independence of values of power series
$f_1(X), f_2(X), f_3(X) \in \Z[[X]] \setminus \Z[X]$ of the form
\begin{align*}
f_1(X) &= \sum_{m \in S_1} t_1(m) X^m, \quad 
f_2(X) = \sum_{m \in S_1} t_2(m) X^m, \quad 
f_3(X) = \sum_{m \in S_2} t_3(m) X^m,
\end{align*}
where $S_1, S_2 \subset \N$ are infinite sets, $t_1(m),t_2(m)\in\Z_{>0}$ for $m\in S_1$, and $t_3(m)\in\Z_{>0}$ for $m\in S_2$.

We introduce the set-theoretic notation used throughout the paper. Let $\mathcal{A}$ and $\mathcal{B}$ be nonempty subsets of $\N$. For each nonnegative real number $R$, set
\[
\lambda(\mathcal{A};R)=\operatorname{Card}([0,R)\cap\mathcal{A}).
\]
When $R>\min \mathcal{A}$, put
\[
\theta(\mathcal{A};R)=\max([0,R)\cap\mathcal{A}).
\]
We define the Minkowski sum $\mathcal{A}+\mathcal{B}$ by
\[
\mathcal{A}+\mathcal{B}:=\{a+b\mid a\in\mathcal{A},\ b\in\mathcal{B}\}.
\]
For each positive integer $k$, set $k\mathcal{A}:=\underbrace{\mathcal{A}+\cdots+\mathcal{A}}_{k\ \text{times}}$, and put $0\mathcal{A}:=\{0\}$.

We now state our criterion for algebraic independence. For convenience, put $t_1(m)=t_2(m)=0$ for $m\in \N\setminus S_1$, and put $t_3(m)=0$ for $m\in \N\setminus S_2$. 
\begin{thm}\label{mainThm}
We assume that the following conditions.
\begin{itemize}
\item[\rm{(1)}] There exists a constant $C_1>1$ such that, for all sufficiently large $R$, we have $[R,C_1R)\cap S_2\ne\emptyset$.
\item[\rm{(2)}] 
For every positive real number $\varepsilon$, we have, for $j=1,2$,
\begin{align*}
\lambda(S_j;R)&=o(R^\varepsilon)
\end{align*}
as $R$ tends to infinity.
\item[\rm{(3)}] For any nonnegative integers $a_1,a_2$, there exists a positive constant $C_2=C_2(a_1,a_2)$, depending only on $a_1,a_2$ such that, for all $R\ge C_2$,
\[
R-\theta((1+a_1)S_1;R)<\frac{R}{\lambda(S_1;R)^{a_1}\lambda(S_2;R)^{a_2}}.
\]
\item[\rm{(4)}] There exists a constant $C_3$ with $0<C_3<1$ such that, for each $i=1,2,3$,
\[
\log^{+}t_i(n)=o(n^{1-C_3})
\]
as $n$ tends to infinity.
\item[\rm{(5)}] We have
\[
\lim_{\substack{n\to\infty\\ n\in S_1}}\frac{t_1(n)}{t_2(n)}=\infty.
\]
\end{itemize}
Then, for every Pisot or Salem number $\beta$, the three values $f_1(\beta^{-1}), f_2(\beta^{-1}), f_3(\beta^{-1})$ are algebraically independent.
\end{thm}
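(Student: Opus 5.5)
We argue by contradiction, following the Erd\H{o}s--Straus type method as developed in \cite{Kaneko2015,Kaneko2019,Kaneko2017}. Suppose there is a nonzero polynomial $P(Y_1,Y_2,Y_3)\in\Z[Y_1,Y_2,Y_3]$ with $P(f_1(\beta^{-1}),f_2(\beta^{-1}),f_3(\beta^{-1}))=0$. Multiplying by a suitable conjugate factor, we may take coefficients in $\Z[\beta]$ and of minimal total degree.

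We first order the monomials $Y_1^{a}Y_2^{b}Y_3^{c}$. The degree $c$ in $Y_3$ is the coarse parameter. Within fixed $c$, we order by $a+b$. Within fixed $c$ and $a+b$, we order by $a$, since $t_1$ dominates $t_2$. We then pick the maximal monomial $Y_1^{a_0}Y_2^{b_0}Y_3^{c_0}$ in this order appearing in $P$, set $a_1=a_0+b_0-1$ and $a_2=c_0$, and expand the identity as a power series $g(X)=P(f_1(X),f_2(X),f_3(X))=\sum_n s(n)X^n$ with $g(\beta^{-1})=0$. By conditions (2) and (4), the coefficients satisfy $\log^+|s(n)|=o(n^{1-C_3})$ up to Minkowski-sum counting, and they are supported on sums $kS_1+lS_2$.

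The core step is to exhibit, for infinitely many $R$, a long gap in the support of $g$ followed by a coefficient that is nonzero and controlled. We take $R$ with $R\in S_2$ via (1), so that $R$ lies in a window $[R',C_1R')$. We then look at the point $N=R+\theta((1+a_1)S_1;R)$ or a similar sum, built from the largest element of $(a_0+b_0)S_1$ below $R$ together with $c_0$ elements of $S_2$ near $R$. Condition (3) says the element $\theta((1+a_1)S_1;R)$ lies within $R/(\lambda(S_1)^{a_1}\lambda(S_2)^{a_2})$ of $R$. Counting shows that the lower-order monomials contribute at most $\lambda(S_1;R)^{a_1}\lambda(S_2;R)^{a_2}$ exponents in the relevant window. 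This count is $o(R^\varepsilon)$ by (2), so by pigeonhole a gap of length $\gg R^{1-\varepsilon}$ exists in the support. At the extremal exponent, only the top monomial contributes, and its coefficient is essentially $c\,t_1^{a_0}t_2^{b_0}t_3^{c_0}$ plus terms from $Y_1^{a_0+1}Y_2^{b_0-1}\cdots$. We need to rule out cancellation among the monomials with the same $a+b$. Condition (5) does this: because $t_1(n)/t_2(n)\to\infty$ along $S_1$, the term with the largest $a$ dominates, and the coefficient is nonzero for large $R$. If $\beta$ is not an integer, we work with the conjugates of the coefficients as in \cite{Kaneko2015}.

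Finally, we derive the contradiction from the Pisot/Salem property. We split $g(\beta^{-1})=0$ as $A_R+B_R$, where $A_R$ is the polynomial part up to the exponent $N$. Then $\beta^{N}A_R$ is a nonzero algebraic integer in $\Z[\beta]$, which is nonzero thanks to the dominant coefficient and a norm argument. Its conjugates are bounded by $\exp(o(N^{1-C_3}))\cdot\mathrm{poly}(N)$, since conjugates of $\beta$ have modulus $\le1$. Hence $|\beta^NA_R|\ge \exp(-o(N^{1-C_3}))$. On the other hand, $|\beta^N B_R|\le \beta^{-\text{gap}}\exp(o(N^{1-C_3}))$, and the gap is $\gg N^{1-\varepsilon}$; choosing $\varepsilon<C_3$ gives a contradiction. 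The main obstacle is the combinatorial core step, namely guaranteeing simultaneously that the gap is long and that the leading coefficient does not vanish when several monomials of the same bidegree in $(Y_1,Y_2)$ interact. This is where (3) and (5) must be used carefully, and I would first try an induction on $c_0$ that reduces to the two-function criteria of \cite{Kaneko2019,Kaneko2017}.
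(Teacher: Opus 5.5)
Your proposal has genuine gaps at exactly the step you flag as the main obstacle: the monomial order, and the nonvanishing of $\beta^NA_R$.

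\textbf{The monomial order is the wrong way round.} The support of $Y_1^aY_2^bY_3^c$ is $(a+b)S_1+cS_2$. Condition (3) is tailored to an order in which $a+b$ is the \emph{primary} key. It says that the gaps of $(1+a_1)S_1$ below $R$ are shorter than $R$ divided by $\lambda(S_1;R)^{a_1}\lambda(S_2;R)^{a_2}$ (the size of $a_1S_1+a_2S_2$), for \emph{every} $a_2$. This is the paper's hybrid order: $k_1+k_2$ first, then $k_3$, then $k_1$.

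With $c$ as your coarse key, a monomial with $c<c_0$ but $a+b>a_0+b_0$ counts as lower. Yet by (3) its support has no gap below $R$ longer than $R/(\lambda(S_1;R)^{a_0+b_0}\lambda(S_2;R)^{c_0})$. So your claim that the lower-order monomials contribute at most $\lambda(S_1;R)^{a_1}\lambda(S_2;R)^{a_2}$ exponents (with $a_1=a_0+b_0-1$, $a_2=c_0$) is false. Nothing in (1)--(5) then places a point of the sparser top support at the left end of a long stretch free of lower-order support.

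With the hybrid order, the paper finds such a point in two stages:
\begin{enumerate}
\item The supports with smaller $k_1+k_2$ form a set $W_2$ of at most $\frac{1}{32}\underline{\lambda}(N)^{\bm e}$ points. By (3) with $(a_1,a_2)=(g_1+g_2-1,D+1)$, the long gaps of $W_2$ contain points of $(g_1+g_2)\widetilde{S_1}$ near both ends.
\item Inside these gaps, the supports with the same $k_1+k_2$ and smaller $k_3$ form $W_1$. From the left end $q_y$ of a long gap $[q_y,q_{1+y})$ of $W_1$, one adds $\theta_0\in\widetilde{S_2}$ of size comparable to $|J_1|$, supplied by Condition (1), to land on $M\in(g_1+g_2)\widetilde{S_1}+g_3\widetilde{S_2}$.
\end{enumerate}
This last step works only because $k_3$ is the secondary key.

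\textbf{Nonvanishing of $\beta^NA_R$.} A nonzero coefficient at the last exponent $N$ before the gap does not make $\beta^NA_R=\sum_{n\le N}s(n)\beta^{N-n}$ nonzero. For the golden ratio, $1+\beta-\beta^2=0$. Even for $\beta=2$, taking $s(N)=2$ and $s(N-1)=-1$ gives $2+(-1)\cdot 2=0$. Since $g(\beta^{-1})=0$ is assumed, $A_R\neq0$ is equivalent to the tail being nonzero. That is the real content, and no norm argument supplies it.

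The paper proves it by positivity:
\begin{enumerate}
\item All $\rho(\bm k;m)\ge 0$.
\item For $m\in(q_y,q_{1+y})$, which avoids $V$, the total coefficient $\sum_{\bm k}B_{\bm k}\rho(\bm k;m)$ is at least $\frac12 B_{\bm g}\rho(\bm g;m)\ge 0$ (Lemma~\ref{lem:V1}). Here (5) is applied summand by summand to every representation. This is why $S_1$ is replaced by $\widetilde{S_1}$, with small exponents absorbed into $\Z[\beta]$-coefficients without changing $B_{\bm k}$ for $\bm k\in\Gamma_0\cup\{\bm g\}$. It is also why representations with a zero summand are excluded by $m\notin V$.
\item Since $\rho(\bm g;M)\ge 1$ and $q_{1+y}-M>N^{1-\mathcal{C}}$, one gets $Y_{M-1}>0$.
\item The recursion $Y_{R-1}=\beta^{-1}\sum_{\bm k}B_{\bm k}\rho(\bm k;R)+\beta^{-1}Y_R$ propagates positivity backwards through $[q_y,M)$.
\end{enumerate}
Only then is $R$ chosen in the first half of the longest gap $I_1$ of the top support inside $[q_y,M)$. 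This makes $Y_R$ both positive and smaller than $\beta^{-R^{1-2\mathcal{C}}}$, which contradicts Lemma~\ref{lem:algintZRineq} for $Z_R=-Y_R$.
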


We give the proof of Theorem \ref{thm:section1} by using Theorem \ref{mainThm}.
\begin{proof}[Proof of Theorem \ref{thm:section1}]
Conditions (4) and (5) in Theorem \ref{mainThm} follow from Conditions (1) and (2) in Theorem \ref{thm:section1}. 
We denote the numbers in Theorem \ref{thm:section1} by
\begin{align*}
    \sum_{n\in S_1} u_1(n)\beta^{-n}
    , \quad
    \sum_{n\in S_1} u_2(n)\beta^{-n}
    , \quad
    \sum_{n\in S_2} u_3(n)\beta^{-n}
\end{align*}
where $S_1,S_2\subset \N$. 
Putting 
\begin{align*}
    S_2=:\{w(0)<w(1)<\cdots\},
\end{align*}
we see 
\begin{align*}
    \limsup_{m\to\infty}\frac{w(m+1)}{w(m)}<\infty.
\end{align*}
Thus, Condition (1) in Theorem \ref{mainThm} holds. Conditions (2) and (3) can be proved in the same way as the method of \cite{Kaneko2012,Kaneko2019}. 
\end{proof}

\section{Proof of Theorem \ref{mainThm}}
We introduce notation for the proof of Theorem \ref{mainThm}. Let $\mathbb{N}^{0}:=\{0\}$ and for $r \in \mathbb{N} $ and $\bm{l}=\left(l_{1}, \ldots, l_{r}\right) \in \mathbb{N}^{r}$, we define
\begin{equation*}
\begin{aligned}
|\bm{l}| & := \begin{cases}0 & (r=0), \\
l_{1}+\cdots+l_{r} & (r \geq 1),\end{cases} \\
t_i(\bm{l}) & := \begin{cases}1 & (r=0), \\
t_i\left(l_{1}\right) \cdots t_i\left(l_{r}\right) & (r \geq 1). \end{cases}
\end{aligned}
\end{equation*}
Moreover, let $\underline{X}:=(X_1, X_2, X_3)$ and let $\underline{X}^{\bm{k}}:=X_1^{k_1}X_2^{k_2}X_3^{k_3}$ for $\bm{k}=(k_1,k_2,k_3)$. 

We define the hybrid order $>_{\mathrm{hyb}}$ on $\mathbb{N}^{3}$ to classify monomials $\underline{X}^{\bm{k}}$. 
First, let $>_{\mathrm{lex}}$ (resp. $>_{\mathrm{gr}}$) be the lexicographic order (resp. graded lexicographic order) on $\N^2$.
That is, for ${\bm p} = (p_1,p_2),\ {\bm p'} = (p_1',p_2') \in \mathbb{N}^{2}$ with ${\bm p} \neq {\bm p'}$,\ we define ${\bm p}>_\mathrm{lex} {\bm p'}$ if and only if 
$p_1 > p_1'$, or $p_1=p_1'$ and $p_2 > p_2'$. Similarly, ${\bm p}>_\mathrm{gr} {\bm p'}$ holds if and only if $p_1+p_2 > p_1'+p_2'$, or $p_1+p_2=p_1'+p_2'$ and $p_1 > p_1'$. 
For ${\bm k}= (k_1,k_2,k_3),\ {\bm k'} = (k_1',k_2',k_3')\in \mathbb{N}^{3}$ with ${\bm k} \neq {\bm k'}$,\ we define ${\bm k}>_{\mathrm{hyb}} {\bm k'}$ if and only if 
$(k_1+k_2 ,k_3) >_\mathrm{lex} (k_1'+k_2' ,k_3')$, or $(k_1+k_2 ,k_3)=(k_1'+k_2' ,k_3')$ and $(k_1,k_2) >_\mathrm{gr} (k_1',k_2')$. Note that $>_{\mathrm{hyb}}$ is a total order on $\N^3$. 
For instance, we have 
\[
(3,2,0) >_{\mathrm{hyb}} (2,3,0) >_{\mathrm{hyb}} (2,2,7).
\]

In what follows, we write $\xi_j=f_j(\beta^{-1})$ for $j=1,2,3$. For any $\ell_1,\ell_2,\ell_3 \in \mathbb{Z}$, the algebraic independence of $\xi_1,\xi_2,\xi_3$ is equivalent to the algebraic independence of $\xi_1+\ell_1,\xi_2+\ell_2,\xi_3+\ell_3$. Thus, without loss of generality, we may take $0 \in S_1 \cap S_2$ and $t_1(0)=t_2(0)=t_3(0)=1$. 

We shall show that $P(\underline{\xi})=P(\xi_1,\xi_2,\xi_3)\ne0$ for every polynomial $P(\underline{X})\in\mathbb{Z}[X_1,X_2,X_3]\setminus\{0\}$. Since $\xi_1\xi_2\xi_3\ne0$, it is enough to prove this assertion after replacing $P$ by $X_1X_2X_3P$ if necessary. Hence we may assume that $X_1X_2X_3$ divides $P(\underline{X})$. Let $D$ be the total degree of $P$. 
We write $P(\underline{X})$ as 
\begin{equation}\label{eq:poly1}
P(\underline{X})= A_{\bm 0}+\sum_{{\bm{k}} \in \Gamma} A_{\bm{k}} \underline{X}^{\bm{k}},
\end{equation}
where $\Gamma$ is a finite nonempty subset of $\N^3\setminus \{\bm 0\}$ and $A_{\bm k}$ is an integer for every $\bm{k}\in \Gamma\cup\{\bm 0\}$.

Let ${\bm g} = (g_1,g_2,g_3)$ be the greatest element of $\{\bm k\in \Gamma\mid A_{\bm k}\ne 0\}$ with respect to $>_{\mathrm{hyb}}$. 
If necessary, by adding suitable terms of the form $0\cdot \underline{X}^{\bm k}$ with ${\bm k}\in \N^3 \setminus\{\bm 0\}$ to (\ref{eq:poly1}), 
we may assume that 
\begin{align*}
    \Gamma=
    \left \{{\bm{k}} \in \mathbb{N}^{3} \relmiddle| \ k_1, k_2, k_3 \leq D, \ {\bm{k}} \leq_{\mathrm{hyb}}{\bm{g}} \right\} \setminus \{{\bm 0}\}
\end{align*}
because there exist only finitely many $\bm{k}=(k_1,k_2,k_3)\in \N^3 \setminus\{\bm 0\}$ satisfying $\ k_1, k_2, k_3 \leq D$ and  ${\bm{k}} \leq_{\mathrm{hyb}}{\bm{g}}$. 
Without loss of generality,\ we may assume that $A_{\bm g}\geq 1$. 
Since $X_1X_2X_3$ divides $P(\underline{X})$, we see that $g_j\geq 1$ for $j=1,2,3$, and $D\geq 3$. 
Moreover, we define $\Gamma_0,\ \Gamma_1$ and $\Gamma_2$ as follows.
\begin{equation*}
 \begin{split}
  \Gamma_0 :=& \{(k_1,k_2,k_3) \in \Gamma \mid k_1+k_2=g_1+g_2, \ k_3=g_3 \} \setminus \{ {\bm g} \}, \\
  \Gamma_1 :=& \{(k_1,k_2,k_3) \in \Gamma \mid k_1+k_2=g_1+g_2, \ k_3<g_3 \}, \\
  \Gamma_2 :=& \{(k_1,k_2,k_3) \in \Gamma \mid k_1+k_2<g_1+g_2 \}.\\
 \end{split}
\end{equation*}
Then we have the following disjoint union:
\[
\Gamma = \{{\bm g} \} \sqcup \Gamma_0 \sqcup \Gamma_1 \sqcup \Gamma_2.
\]
\begin{rem}
The greatest elements of $(\Gamma_0, >_{\mathrm{hyb}})$, $(\Gamma_1, >_{\mathrm{hyb}})$, and $(\Gamma_2, >_{\mathrm{hyb}})$ are
\[
{\bm k_{0}}:=(g_1-1,g_2+1,g_3),\quad
{\bm k_{1}}:=(g_1+g_2,0,g_3-1),\quad
{\bm k_{2}}:=(g_1+g_2-1,0,D),
\]
respectively. In particular, $\Gamma_0,\Gamma_1,\Gamma_2\ne\emptyset$.
\end{rem}
Now, we define the positive constant $C_4$ as
\begin{equation}\label{eq:C4}
C_4:= \max \bigg\{1,\ \frac{2}{A_{\bm g}}\bigg(1+ \sum_{{\bm k} \in \Gamma_0} |A_{\bm k}| \bigg) \bigg\}.
\end{equation}
Condition\,(5) implies that there exists $C_5 \in \mathbb{Z}_{>0}$ such that for any $n \geq C_5$ with $n \in S_1$, 
\begin{equation}\label{eq:con5}
t_1(n) >C_4t_2(n).
\end{equation}
For $i=1,2$, we define $\widetilde{S_i} :=\big([C_5, \infty) \cap S_i \big) \cup \{ 0\}$.\ Since $0 \in S_1\cap S_2$,\ we have $\widetilde{S_i} \subset S_i$.
From (\ref{eq:con5}),\ for any $n \in \widetilde{S_1} \setminus \{ 0\}$,
\begin{equation} \label{eq:tt}
t_1(n) >C_4t_2(n).
\end{equation}
Now, we define $\eta_1,\, \eta_2$ and $\eta_3$ as follows:
\begin{equation*}
\eta_1 := \sum_{m \in \widetilde{S_1}} t_1(m) \beta^{-m}, \quad 
\eta_2 := \sum_{m \in \widetilde{S_1}} t_2(m)\beta^{-m}, \quad 
\eta_3 := \sum_{m \in \widetilde{S_2}} t_3(m)\beta^{-m}.
\end{equation*}
Then $\beta^{C_5}(\xi_i - \eta_i) \in \mathbb{Z}[\beta]$ for $i=1,2,3$ and so 
\begin{equation*} 
\begin{split}
\beta^{C_5 D}P(\underline{\xi})&=\beta^{C_5D}A_{\bm 0}+\sum_{{\bm k} \in \Gamma}\beta^{C_5 D}A_{\bm k} \underline{\xi}^{\bm k} \\
&= \beta^{C_5D}A_{\bm 0}
  +\sum_{{\bm k} \in \Gamma}\beta^{C_5(D- |{\bm k}|)} A_{\bm k}
  \prod_{i=1}^3 (\beta^{C_5} (\xi_i-\eta_i) + \beta^{C_5} \eta_i)^{k_i} \in (\mathbb{Z}[\beta])[\underline{\eta}].
\end{split}
\end{equation*}
We set
\[
Q(\underline{X}) :=\beta^{C_5D}A_{\bm 0}+\sum_{{\bm k} \in \Gamma}\beta^{C_5(D- |{\bm k}|)} A_{\bm k} \prod_{i=1}^3 (\beta^{C_5} (\xi_i-\eta_i) + \beta^{C_5} X_i)^{k_i}.
\]
Then we can write $Q(\un{X})$ as 
\[Q(\un{X})=B_{\bm 0}+\sum_{{\bm k} \in \Gamma} B_{\bm k} \underline{X}^{\bm k},\]

\noindent where $B_{\bm k} \in \mathbb{Z}[\beta]$ for any ${\bm k} \in \Gamma \cup \{{\bm 0}\}$.\ Moreover, we have 
\begin{equation*}
Q(\underline{\eta})=\beta^{C_5D}P(\underline{\xi}).
\end{equation*}
\begin{lem} \label{lem:AB}
For any ${\bm k} \in \Gamma_0 \cup \{{\bm g}\}$,
\begin{equation}\label{eq:AB}
B_{\bm k} = \beta^{C_5 D} A_{\bm k}.
\end{equation}
\end{lem}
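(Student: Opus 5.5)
We expand $Q(\underline{X})$ directly. Put $c_i:=\beta^{C_5}(\xi_i-\eta_i)\in\Z[\beta]$. For each $\bm k\in\Gamma$ the binomial theorem gives
\[
\prod_{i=1}^3 (c_i+\beta^{C_5}X_i)^{k_i}=\sum_{\bm j\le \bm k}\prod_{i=1}^3\binom{k_i}{j_i}c_i^{k_i-j_i}\beta^{C_5 j_i}X_i^{j_i},
\]
where $\bm j\le\bm k$ is meant componentwise. The coefficient of $\underline{X}^{\bm k'}$ in $Q$ is therefore
\[
B_{\bm k'}=\sum_{\substack{\bm k\in\Gamma\\ \bm k\ge \bm k'}}\beta^{C_5(D-|\bm k|)}A_{\bm k}\,\beta^{C_5|\bm k'|}\prod_{i=1}^3\binom{k_i}{k_i'}c_i^{k_i-k_i'}
\]
for $\bm k'\ne\bm 0$. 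The $\bm k=\bm k'$ term equals $\beta^{C_5 D}A_{\bm k'}$. The lemma thus reduces to showing that, for $\bm k'\in\Gamma_0\cup\{\bm g\}$, no $\bm k\in\Gamma$ with $\bm k\ge\bm k'$ and $\bm k\ne\bm k'$ has $A_{\bm k}\neq 0$. In fact we show that no such $\bm k$ lies in $\Gamma$ at all.

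The key step is a comparison with the hybrid order. Let $\bm k'\in\Gamma_0\cup\{\bm g\}$, so $k_1'+k_2'=g_1+g_2$ and $k_3'=g_3$. Suppose $\bm k\ge\bm k'$ componentwise with $\bm k\ne\bm k'$. Then $k_1+k_2\ge g_1+g_2$ and $k_3\ge g_3$, with at least one of the three coordinates strictly larger.
\begin{itemize}
\item If $k_1+k_2>g_1+g_2$, then $(k_1+k_2,k_3)>_{\mathrm{lex}}(g_1+g_2,g_3)$.
\item If $k_1+k_2=g_1+g_2$, then $k_1=k_1'$ and $k_2=k_2'$, so the strict increase must be $k_3>g_3$, and again $(k_1+k_2,k_3)>_{\mathrm{lex}}(g_1+g_2,g_3)$.
\end{itemize}
In both cases $\bm k>_{\mathrm{hyb}}\bm g$. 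Since $\Gamma$ was arranged to consist only of $\bm k\le_{\mathrm{hyb}}\bm g$, such $\bm k$ does not lie in $\Gamma$. The constant term $A_{\bm 0}$ contributes nothing, because $\bm k'\ne\bm 0$. Hence only the diagonal term survives, and $B_{\bm k'}=\beta^{C_5D}A_{\bm k'}$.

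The only point needing care, and the main (mild) obstacle, is the bookkeeping: the expansion must be indexed over $\Gamma$ together with the $\bm 0$ term, and the componentwise order must be checked to be compatible with $>_{\mathrm{hyb}}$ on the slice $k_1+k_2=g_1+g_2$, $k_3=g_3$. Within that slice the order $>_{\mathrm{gr}}$ plays no role, because componentwise domination forces $(k_1,k_2)=(k_1',k_2')$ once the sums agree.
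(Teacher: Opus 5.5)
Your proof is correct and follows the same route as the paper. Both isolate the monomials $\underline{X}^{\bm k}$ with $(k_1+k_2,k_3)=(g_1+g_2,g_3)$ and note that only the leading term $\prod_i \beta^{C_5 k_i}X_i^{k_i}$ of the product indexed by the same $\bm k$ contributes; you additionally make explicit the step the paper leaves implicit, namely that any $\bm k\geq\bm k'$ componentwise with $\bm k\neq\bm k'$ satisfies $\bm k>_{\mathrm{hyb}}\bm g$ and so is not in $\Gamma$.
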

\begin{proof}
Recall for any $\bk=(k_1,k_2,k_3)\in \Gamma\cup \{\bm 0\}$ that $\bk\in \Gamma_0\cup\{\bg\}$ if and only if 
\[(k_1+k_2,k_3)=(g_1+g_2,g_3).\] 
Considering the sum of monomials $\un{X}^{\bk}$ in $Q(\un{X})$ satisfying the condition above, we get 
\begin{equation*}
\begin{split}
\sum_{{\bm k} \in \Gamma_0 \cup \{{\bm g}\}}B_{\bm k} \underline{X}^{\bm k} &=\sum_{{\bm k} \in \Gamma_0 \cup \{{\bm g}\}}\beta^{C_5(D- |{\bm k}|)} A_{\bm k} \prod_{i=1}^3 \beta^{C_5k_i}X_i^{k_i} \\
&=\sum_{{\bm k} \in \Gamma_0 \cup \{{\bm g}\}}\beta^{C_5D} A_{\bm k}\underline{X}^{\bm k}.
\end{split}
\end{equation*}
\end{proof}
For any monomial $\underline{\eta}^{\bm k}$ with ${\bm k}= (k_1, k_2, k_3) \in \Gamma $, we see that
\begin{equation*}
\begin{split}
&\underline{\eta}^{{\bm k}} 
= \left(\sum_{m \in \widetilde{S_1}} t_1(m) \beta^{-m}\right)^{k_1}
  \left(\sum_{m \in \widetilde{S_1}} t_2(m)\beta^{-m}\right)^{k_2}
  \left(\sum_{m \in \widetilde{S_2}} t_3(m)\beta^{-m}\right)^{k_3} \\
&=\left(\sum_{\bm{m}_{1} \in \widetilde{S_1}^{k_1}} t_1(\bm{m}_{1}) \beta^{-|\bm{m}_{1}|}\right)
  \left(\sum_{\bm{m}_{2} \in \widetilde{S_1}^{k_2}} t_2(\bm{m}_{2})\beta^{-|\bm{m}_{2}|}\right)
  \left(\sum_{\bm{m}_{3} \in \widetilde{S_2}^{k_3}} t_3(\bm{m}_{3})\beta^{-|\bm{m}_{3}|}\right) \\
&=\sum_{\substack{\bm{m}_{1} \in \widetilde{S_1}^{k_1},\ \bm{m}_{2} \in \widetilde{S_1}^{k_2}\\
\bm{m}_{3} \in \widetilde{S_2}^{k_3}}}
 t_1(\bm{m}_{1})t_2(\bm{m}_{2})t_3(\bm{m}_{3})
 \beta^{-(|\bm{m}_{1}|+|\bm{m}_{2}|+|\bm{m}_{3}|)} .
\end{split}
\end{equation*}
Set 
\[
\Xi({\bm k},m) := \{ ({\bm a}, {\bm b}) \in \widetilde{S_1}^{k_1+k_2}\times \widetilde{S_2}^{k_3} \mid m= |{\bm a}| + |{\bm b}| \}.
\]
For any ${\bm a}=(a(1), a(2), \cdots a(k_1+k_2)) \in \widetilde{S_1}^{k_1+k_2}$, let 
\begin{align}\label{eqn:vector}
{\bm a({\bm k}, 1)}:=(a(1), \cdots , a(k_1)), \ 
{\bm a({\bm k}, 2)}:=(a(k_1+1), \cdots , a(k_1+k_2)).
\end{align}
Putting 
\[
\rho({\bm k} ;m) :=\sum_{({\bm a}, {\bm b}) \in \Xi({\bm k},m)} t_1({\bm a({\bm k}, 1)}) t_2({\bm a({\bm k}, 2)}) t_3({\bm b}) \in \N,
\]
we have 
\[\un{\eta}^{\bk}=\sum_{m=0}^\infty \rho({\bm k} ;m) \beta^{-m}.\]

\begin{lem} \label{lem:Minkowski}
Let $m \in \mathbb{N}$ and let ${\bm k}= (k_1, k_2, k_3) \in \Gamma$.\ Then $\rho({\bm k}; m) \geq 1$ if and only if $m \in (k_1+k_2) \widetilde{S_1}+k_3 \widetilde{S_2}$.
\end{lem}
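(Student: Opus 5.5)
The plan is to prove the lemma straight from the definition of $\rho(\bm k;m)$ as a sum of nonnegative terms. Each summand
\[
t_1(\bm a(\bm k,1))\,t_2(\bm a(\bm k,2))\,t_3(\bm b)
\]
is a product of values $t_1(n)$, $t_2(n)$ with $n\in\widetilde{S_1}\subset S_1$ and values $t_3(n)$ with $n\in\widetilde{S_2}\subset S_2$. By assumption all of these are positive integers. An empty product (when some $k_i=0$) equals $1$ by the convention for $r=0$. Hence every summand is a positive integer, and $\rho(\bm k;m)\in\N$ is at least the number of elements of $\Xi(\bm k,m)$ (and in fact at least $1$ whenever that set is nonempty).

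It therefore suffices to show that $\Xi(\bm k,m)\neq\emptyset$ if and only if $m\in(k_1+k_2)\widetilde{S_1}+k_3\widetilde{S_2}$. We argue the two directions as follows.

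\emph{If $\Xi(\bm k,m)$ is nonempty.} Take $(\bm a,\bm b)\in\Xi(\bm k,m)$. Then $|\bm a|=a(1)+\cdots+a(k_1+k_2)$ is a sum of $k_1+k_2$ elements of $\widetilde{S_1}$, so it lies in $(k_1+k_2)\widetilde{S_1}$. Likewise $|\bm b|\in k_3\widetilde{S_2}$. When a count is zero we use the conventions $0\mathcal A=\{0\}$ and $|\bm l|=0$ for $r=0$. So $m=|\bm a|+|\bm b|$ lies in the Minkowski sum.

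\emph{If $m$ lies in the Minkowski sum.} Write $m=\sum_{i=1}^{k_1+k_2}a(i)+\sum_{j=1}^{k_3}b(j)$ with $a(i)\in\widetilde{S_1}$ and $b(j)\in\widetilde{S_2}$. The pair $(\bm a,\bm b)$ formed from these elements lies in $\Xi(\bm k,m)$.

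Combining the two directions gives the equivalence. One point to check is that the condition does not depend on how $\bm a$ is split into $\bm a(\bm k,1)$ and $\bm a(\bm k,2)$. This holds because both $t_1$ and $t_2$ are positive on $S_1$, so any split gives a positive summand.

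There is no real obstacle here. The only care needed is with the degenerate cases $k_1+k_2=0$ or $k_3=0$, and with the positivity of $t_1,t_2,t_3$ at $0$; both are covered by the normalization $t_1(0)=t_2(0)=t_3(0)=1$ and $0\in\widetilde{S_i}$.
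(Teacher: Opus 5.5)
Your proof is correct and follows the paper's argument. Positivity of $t_1,t_2$ on $\widetilde{S_1}$ and of $t_3$ on $\widetilde{S_2}$ makes every summand of $\rho(\bm k;m)$ positive, so $\rho(\bm k;m)\ge 1$ exactly when $\Xi(\bm k,m)\neq\emptyset$, which is membership in $(k_1+k_2)\widetilde{S_1}+k_3\widetilde{S_2}$. You simply spell out this last equivalence and the degenerate cases $k_1+k_2=0$ or $k_3=0$ more explicitly than the paper does.
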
 
\begin{proof}
If $n\in\widetilde{S_1}$, then $t_1(n)$ and $t_2(n)$ are positive, and if $n\in\widetilde{S_2}$, then $t_3(n)$ is positive. Thus the lemma follows because $\rho({\bm k};m)>0$ if and only if $\Xi({\bm k},m)$ is not an empty set.
\end{proof}
\noindent Let $R$ be a nonnegative integer.\ Then
\begin{equation*}
\begin{aligned}
\beta^{R+C_5 D} P(\underline{\xi})&=\beta^{R}Q(\underline{\eta})
=B_{\bm 0}\beta^{R}+\beta^{R}\sum_{{\bm k} \in \Gamma} B_{\bm k} \underline{\eta}^{\bm k} \\
&=B_{\bm 0}\beta^{R}+\sum_{{\bm k} \in \Gamma} B_{\bm k} \sum_{m=-R}^\infty \beta^{-m}\rho({\bm k} ;m+R).
\end{aligned}
\end{equation*}
\noindent We put 
\begin{equation*}
\begin{split}
& Y_R:= \sum_{\bm{k} \in \Gamma} B_{\bm{k}}\sum_{m=1}^\infty \beta^{-m}\rho({\bm k} ;m+R), \\
& Z_R:= B_{\bm 0}\beta^{R}+\sum_{\bm{k} \in \Gamma} B_{\bm{k}}\sum_{m=-R}^{0} \beta^{-m}\rho({\bm k} ;m+R).
\end{split}
\end{equation*}
Then
\begin{equation}\label{eq:PandYRZR}
\beta^{R+C_5 D} P(\underline{\xi})=Y_R+Z_R.
\end{equation}
Note that $Z_R \in \mathbb{Z}[\beta]$.\ We prove Theorem \ref{mainThm} by evaluating $Y_R$ and $Z_R$. 
Let $\mathcal{C}:=C_3/2$. 
First, we show for any sufficiently large integer $R$ that
$$
Z_R=0 \quad \text{or} \quad |Z_R| \geq \beta^{-R^{1-2\mathcal{C}}}
$$
in Lemma \ref{lem:algintZRineq}.\ Next, we show in Lemma \ref{lem:I1YR} that there exists an arbitrarily large integer $R$ satisfying 
$$
0<Y_R< \beta^{-R^{1-2\mathcal{C}}},
$$
which implies $P(\underline{\xi})\neq0$ by (\ref{eq:PandYRZR}). 
We first estimate $Z_R$, using the following lemma. 
\begin{lem} \label{lem:logrho}
For any ${\bm k} \in \Gamma$,
$$
\log^+ \rho({\bm k}; m)  = o(m^{1-2\mathcal{C}})
$$
as $m$ tends to infinity.
\end{lem}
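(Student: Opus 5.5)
We bound $\rho(\bk;m)$ by splitting it into two factors. The first factor is the number of terms in the sum defining $\rho$, namely $\operatorname{Card}\Xi(\bk;m)$. The second factor is the largest summand $t_1(\ba(\bk,1))t_2(\ba(\bk,2))t_3(\bb)$ over $(\ba,\bb)\in\Xi(\bk;m)$. So we get
\[
\log^+\rho(\bk;m)\le \log^+\operatorname{Card}\Xi(\bk;m)+\max_{(\ba,\bb)\in\Xi(\bk;m)}\log^+\bigl(t_1(\ba(\bk,1))t_2(\ba(\bk,2))t_3(\bb)\bigr).
\]
We show that each of the two terms is $o(m^{1-2\mathcal{C}})=o(m^{1-C_3})$.

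\emph{Counting term.} Every coordinate of $(\ba,\bb)\in\Xi(\bk;m)$ is at most $m$, so each coordinate lies in $[0,m+1)\cap\widetilde{S_j}$ for the appropriate $j$. Hence
\[
\operatorname{Card}\Xi(\bk;m)\le \lambda(S_1;m+1)^{k_1+k_2}\lambda(S_2;m+1)^{k_3}\le \lambda(S_1;m+1)^{D}\lambda(S_2;m+1)^{D}.
\]
Condition (2) with $\varepsilon=1$ gives $\lambda(S_j;m+1)=o(m+1)$. Therefore $\log^+\operatorname{Card}\Xi(\bk;m)\ll_D \log(m+2)$, which is $o(m^{1-C_3})$ because $1-C_3>0$. (Any $\varepsilon$ in Condition (2) works here, and even the trivial bound $(m+1)^{3D}$ would suffice.)

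\emph{Coefficient term.} This is the only step that needs care. Let $\phi(x)$ denote the maximum of $\log^+t_i(n)$ over $i=1,2,3$ and $n\le x$. Condition (4) gives $\log^+t_i(n)=o(n^{1-C_3})$. Since $n^{1-C_3}$ is increasing, a standard argument shows $\phi(x)=o(x^{1-C_3})$: split $n\le x$ into $n\le N_0$, where $t_i(n)$ is bounded, and $N_0<n\le x$, where $\log^+t_i(n)\le\epsilon n^{1-C_3}\le \epsilon x^{1-C_3}$. Now take $(\ba,\bb)\in\Xi(\bk;m)$. The summand is a product of at most $3D$ factors $t_i(n)$, each with $n\le m$, and every factor is a positive integer (on $\widetilde{S_j}$ they are positive by assumption). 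So its logarithm is at most $3D\,\phi(m)=o(m^{1-C_3})$.

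Adding the two estimates gives $\log^+\rho(\bk;m)=o(m^{1-C_3})=o(m^{1-2\mathcal{C}})$, as required, since $\mathcal{C}=C_3/2$. The implied constants depend only on $D$, the number of factors, which is fixed for the fixed finite set $\Gamma$. The only subtle point is the monotonization $\phi$ in the coefficient term; the counting term is routine.
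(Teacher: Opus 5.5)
Your proof is correct and is essentially the paper's argument: bound $\rho(\bk;m)$ by the number of terms, at most $(m+1)^{3D}$, times the largest summand, at most $\bigl(\max_{i,\,n\le m}t_i(n)\bigr)^{3D}$, and use Condition (4) to make the logarithm of the latter $o(m^{1-2\mathcal{C}})$. Condition (2) is not needed for the counting term, since the trivial bound $(m+1)^{3D}$ that you mention is what the paper uses, and your explicit monotonization via $\phi$ spells out a step the paper leaves implicit.
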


\begin{proof}
Let ${\bm k} =(k_1,k_2,k_3)\in \Gamma$. Since $\Xi({\bm k},m) \subset \{0,1,\cdots , m \}^{k_1+k_2+k_3}$, we have
\begin{equation}
\begin{split}
\rho({\bm k}; m) &\leq (m+1)^{k_1+k_2+k_3} \bigg( \max_{\substack{i=1,2,3 \\ 0 \leq n \leq m}}t_i(n) \bigg)^{k_1+k_2+k_3}\\
&\leq (m+1)^{3D} \bigg( \max_{\substack{i=1,2,3 \\ 0 \leq n \leq m}}t_i(n) \bigg)^{3D} \label{eq:rhomax}.
\end{split}
\end{equation}
Let $\varepsilon$ be an arbitrary positive real number.  
By Condition\,(4), if $n$ is sufficiently large depending only on $\varepsilon$, then, for $i=1,2,3,$ 
\begin{equation*}
    \log^+ t_i(n)<\frac{\varepsilon}{9D} n^{1-2\mathcal{C}}.
\end{equation*}
In particular, if $m$ is sufficiently large depending only on $\varepsilon$, then we have 
\[\log^{+}\left(\max_{\substack{i=1,2,3 \\ 0\leq n\leq m}} t_i(n)\right)\leq \frac{\varepsilon}{9D} m^{1-2\mathcal{C}},\]
and so, by (\ref{eq:rhomax}), 
\[
\log^{+} \rho(\bk;m)< \varepsilon m^{1-2\mathcal{C}}.
\]
\end{proof}
\noindent Now we estimate $Z_R$.
\begin{lem}\label{lem:algintZRineq}
For any sufficiently large integer $R$,
$$
Z_R=0 \quad \text{or} \quad |Z_R| \geq \beta^{-R^{1-2\mathcal{C}}}.
$$
\end{lem}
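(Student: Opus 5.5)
We use the standard Liouville-type argument for elements of $\mathbb{Z}[\beta]$. Write $d=[\Q(\beta):\Q]$ and let $\beta=\beta_1,\beta_2,\ldots,\beta_d$ be the conjugates of $\beta$, with embeddings $\sigma_j:\Q(\beta)\to\C$, $\sigma_j(\beta)=\beta_j$. Since $Z_R\in\mathbb{Z}[\beta]$, if $Z_R\ne0$ then $N(Z_R)=\prod_{j=1}^d\sigma_j(Z_R)$ is a nonzero rational integer, so $|N(Z_R)|\ge1$ and
\[
|Z_R|\ \ge\ \prod_{j=2}^{d}|\sigma_j(Z_R)|^{-1}.
\]
It therefore suffices to show that $\log|\sigma_j(Z_R)|\le \varepsilon R^{1-2\mathcal{C}}$ for each $j\ge2$, for any fixed $\varepsilon>0$ and all $R$ large, and then take $\varepsilon<(\log\beta)/d$ (say).

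To bound $|\sigma_j(Z_R)|$ for $j\ge2$, we use that $|\beta_j|\le1$, because $\beta$ is Pisot or Salem. We substitute $n=m+R$ in the definition of $Z_R$, so that
\[
Z_R=B_{\bm 0}\beta^{R}+\sum_{\bm k\in\Gamma}B_{\bm k}\sum_{n=0}^{R}\beta^{R-n}\rho(\bm k;n).
\]
Here $R-n\ge0$, so $|\beta_j^{R-n}|\le1$. The coefficients $B_{\bm k}\in\mathbb{Z}[\beta]$ are fixed, independent of $R$, so $H:=\max_{j,\bm k}|\sigma_j(B_{\bm k})|$ is a constant. This gives
\[
|\sigma_j(Z_R)|\le H\Bigl(1+\operatorname{Card}(\Gamma)\,(R+1)\max_{\bm k\in\Gamma,\,0\le n\le R}\rho(\bm k;n)\Bigr).
\]
By Lemma~\ref{lem:logrho}, for each $\bm k\in\Gamma$ and every $\varepsilon>0$ there is $n_0$ with $\log^+\rho(\bm k;n)\le\varepsilon n^{1-2\mathcal{C}}\le\varepsilon R^{1-2\mathcal{C}}$ for $n_0\le n\le R$. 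The finitely many $n<n_0$ contribute only a bounded amount. Hence $\log|\sigma_j(Z_R)|\le \varepsilon R^{1-2\mathcal{C}}+O(\log R)$. Note that $1-2\mathcal{C}=1-C_3>0$, so the $O(\log R)$ term is absorbed into $\varepsilon R^{1-2\mathcal{C}}$ for large $R$.

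Combining, $|Z_R|\ge \exp\bigl(-(d-1)\cdot 2\varepsilon R^{1-2\mathcal{C}}\bigr)\ge\beta^{-R^{1-2\mathcal{C}}}$ once $2(d-1)\varepsilon\le\log\beta$. If $d=1$, then $\beta$ is an integer and $|Z_R|\ge1$ trivially. There is no real obstacle here. The only points needing care are checking that all exponents of $\beta_j$ are nonnegative, which is exactly why $Z_R$ collects the terms with $m\le0$, and that the bound from Lemma~\ref{lem:logrho} is uniform over $n\le R$. The latter holds by the monotonicity of $n^{1-2\mathcal{C}}$ together with the finitely many small-$n$ exceptions.
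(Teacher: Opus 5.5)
Your proposal is correct and follows essentially the paper's argument. The paper also uses $|\beta_i|\le 1$ for $i\ge 2$ together with Lemma~\ref{lem:logrho} to get $\prod_{i\ge 2}|\pi_i(Z_R)|\le \beta^{R^{1-2\mathcal{C}}}$, then uses that a nonzero element of $\mathbb{Z}[\beta]$ has norm of absolute value at least $1$, and treats $d=1$ separately. One cosmetic point: your preliminary choice $\varepsilon<(\log\beta)/d$ should be replaced by the condition $2(d-1)\varepsilon\le\log\beta$ that you actually use at the end.
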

\begin{proof}
Put $d:=\deg\, \beta$.\ 
We may assume that $d\geq 2$. In fact, if $d=1$, then Lemma \ref{lem:algintZRineq} is clear because $Z_R$ is a rational integer. 
Let $\pi_i: \mathbb{Q}(\beta) \to \mathbb{C}$ be the conjugate map of $\beta$ for $i=1,\cdots,d$,\ where $\pi_1=\mathrm{id}|_{\mathbb{Q}(\beta)}$.\ Moreover, define $\beta_i:=\pi_i (\beta)$ for $i =1, \cdots, d$.\ 
Note that $|\beta_i| \leq 1$ for $i=2, \cdots ,d$ because $\beta$ is a Pisot number or a Salem number.\ Hence, there exists a positive constant $C_6>0$ such that
\begin{equation*}
\begin{split}
|\pi_i(Z_R)| &\leq |\pi_i(B_{\bm 0})||\beta_i|^{R}+\sum_{{\bm k} \in \Gamma} |\pi_i(B_{\bm k})| \sum_{m=0}^R |\beta_i|^{m} \rho({\bm k}; -m+R) \\
& \leq C_6+C_6 (R+1)\sum_{{\bm k} \in \Gamma} \max_{0 \leq m \leq R}\rho({\bm k};m).
\end{split}
\end{equation*}
By Lemma \ref{lem:logrho},\ for $i= 2,\cdots, d$,
\begin{equation*}
\log^{+}|\pi_i(Z_R)| = o(R^{1-2\mathcal{C}})
\end{equation*}
as $R$ tends to infinity.\ Therefore,\ for any sufficiently large $R$,
\begin{equation*}
\sum_{i=2}^d \log^{+}|\pi_i(Z_R)| \leq R^{1-2\mathcal{C}} \log \beta.
\end{equation*}
Thus we have
\begin{equation*}
\prod_{i=2}^d | \pi_i(Z_R) | \leq \beta^{R^{1-2\mathcal{C}}}.
\end{equation*}
Now assume that $Z_R\neq 0$.\ Since $Z_R \in \mathbb{Z}[\beta]$,\ we have
\begin{equation*}
1\leq | Z_R| \prod_{i=2}^d |\pi_i(Z_R)|.
\end{equation*}
Therefore, for any sufficiently large $R$,\ we obtain
\begin{equation*}
|Z_R| \geq \beta^{-R^{1-2\mathcal{C}}}.
\end{equation*}
\end{proof}
\noindent In what follows, we estimate $Y_R$.\ First, we define the set $V$ as 
\begin{equation*}
V:= \bigcup_{\substack{
  {\bm k} \in \Gamma_1 \sqcup \Gamma_2\\
   \bk=(k_1,k_2,k_3)
 }
}
\left(
(k_1+k_2) \widetilde{S_1}+k_3 \widetilde{S_2}
\right).
\end{equation*}
\begin{lem} \label{lem:V1}
For any nonnegative integer $m$ with $m \notin V$,
\begin{equation} \label{eq:V2}
\sum_{{\bm k }\in \Gamma \setminus \{ {\bm g} \}} |B_{\bm k} | \rho ({\bm k} ; m) \leq \frac{1}{2} B_{\bm g} \rho({\bm g};m).
\end{equation}
\end{lem}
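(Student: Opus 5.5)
The plan is to show that, for $m\notin V$, only the indices in $\Gamma_0$ contribute to the left-hand side of (\ref{eq:V2}). For those indices we compare $\rho(\bk;m)$ with $\rho(\bg;m)$ term by term, using (\ref{eq:tt}) and the choice of $C_4$ in (\ref{eq:C4}).

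\emph{Step 1: only $\Gamma_0$ contributes.} Let $\bk\in\Gamma_1\sqcup\Gamma_2$. Since $m\notin V\supset (k_1+k_2)\widetilde{S_1}+k_3\widetilde{S_2}$, Lemma \ref{lem:Minkowski} gives $\rho(\bk;m)=0$. Hence the left-hand side of (\ref{eq:V2}) equals $\sum_{\bk\in\Gamma_0}|B_{\bk}|\rho(\bk;m)$. By Lemma \ref{lem:AB}, this is $\beta^{C_5D}\sum_{\bk\in\Gamma_0}|A_{\bk}|\rho(\bk;m)$, and also $B_{\bg}=\beta^{C_5D}A_{\bg}$.

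\emph{Step 2: the key observation, that $m\notin V$ forbids zero coordinates.} Put $s=g_1+g_2$. Suppose $(\ba,\bb)\in\Xi(\bg,m)$ and some coordinate of $\ba$ equals $0$. Deleting that coordinate shows $m\in(s-1)\widetilde{S_1}+g_3\widetilde{S_2}$, using $0\in\widetilde{S_1}$ when $s-1=0$. The index $(s-1,0,g_3)$ is nonzero because $g_3\ge1$. It has coordinates at most $D$ and satisfies $(s-1)<s$, so it lies in $\Gamma_2$. Therefore $m\in V$, a contradiction. Consequently every $(\ba,\bb)\in\Xi(\bg,m)$ has all coordinates of $\ba$ in $\widetilde{S_1}\setminus\{0\}$.

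\emph{Step 3: termwise comparison.} Take $\bk\in\Gamma_0$. Then $k_1+k_2=s$ and $k_3=g_3$, so $\Xi(\bk,m)=\Xi(\bg,m)$. Moreover $\bk<_{\mathrm{hyb}}\bg$ with equal first-component sums forces $(k_1,k_2)<_{\mathrm{gr}}(g_1,g_2)$, that is, $k_1<g_1$. For each $(\ba,\bb)\in\Xi(\bg,m)$, the $\bg$-summand divided by the $\bk$-summand equals
\[
\prod_{j=k_1+1}^{g_1}\frac{t_1(a(j))}{t_2(a(j))}.
\]
By Step 2 and (\ref{eq:tt}), this ratio exceeds $C_4^{g_1-k_1}\ge C_4$, since $C_4\ge1$. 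Summing over $\Xi(\bg,m)$ gives $\rho(\bk;m)\le C_4^{-1}\rho(\bg;m)$.

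\emph{Step 4: conclusion.} Combining the steps,
\[
\sum_{\bk\in\Gamma\setminus\{\bg\}}|B_{\bk}|\rho(\bk;m)\le \beta^{C_5D}\,\frac{\rho(\bg;m)}{C_4}\sum_{\bk\in\Gamma_0}|A_{\bk}|\le \frac12\beta^{C_5D}A_{\bg}\rho(\bg;m)=\frac12 B_{\bg}\rho(\bg;m),
\]
where the middle inequality is the definition (\ref{eq:C4}) of $C_4$.

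The main obstacle is Step 2. Without it, decompositions whose $\ba$ has zero coordinates make the ratio equal to $1$, because $t_1(0)=t_2(0)=1$. The point is to recognize that such decompositions place $m$ in the Minkowski sum attached to the element $(g_1+g_2-1,0,g_3)\in\Gamma_2$, which is part of $V$.
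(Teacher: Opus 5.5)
Your proposal is correct and follows the paper's proof essentially step for step. You kill the $\Gamma_1\sqcup\Gamma_2$ terms via Lemma~\ref{lem:Minkowski}, use $m\notin V$ to exclude zero coordinates of $\ba$, compare summands termwise via (\ref{eq:tt}) to get $\rho(\bk;m)\le C_4^{-1}\rho(\bg;m)$ on $\Gamma_0$, and conclude with (\ref{eq:C4}) and Lemma~\ref{lem:AB}. The differences are only cosmetic: you name the element $(g_1+g_2-1,0,g_3)\in\Gamma_2$ explicitly where the paper leaves it implicit, your termwise bound covers the case $\Xi(\bg,m)=\emptyset$ directly rather than separately, and your aside about $s-1=0$ is unnecessary since $g_1,g_2\ge 1$.
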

\begin{proof}
Let $m \notin V$.\ Then we have $m \notin (k_1+k_2) \widetilde{S_1}+k_3 \widetilde{S_2}$ for all ${\bm k}=(k_1, k_2, k_3) \in \Gamma_1 \sqcup \Gamma_2$.\ From Lemma \ref{lem:Minkowski},\ $\rho({\bm k} ;m)=0$ for any ${\bm k} \in \Gamma_1 \sqcup \Gamma_2$.\ Since $\Gamma \setminus \{{\bm g} \} = \Gamma_0 \sqcup \Gamma_1 \sqcup \Gamma_2$,\ we have
\begin{equation}\label{eq:22}
\begin{aligned}
\sum_{{\bm k }\in \Gamma \setminus \{ {\bm g} \}} |B_{\bm k} | \rho ({\bm k} ; m)
&=\sum_{{\bm k }\in \Gamma_0} |B_{\bm k} | \rho ({\bm k} ; m).
\end{aligned}
\end{equation}
Without loss of generality, we may assume that $m \in (g_1+g_2)\widetilde{S_1}+g_3\widetilde{S_2}$, and so $\Xi(\bg,m)$ is nonempty. 
 In fact,\ if $m \notin (g_1+g_2)\widetilde{S_1}+g_3\widetilde{S_2}$,\ then we have $\rho({\bm k} ;m)=0$ for any ${\bm k} \in \Gamma_0 \cup \{{\bm g} \}$ by Lemma \ref{lem:Minkowski}.\ Thus both sides of (\ref{eq:V2}) are $0$.\ We now check that for any ${\bm k} \in \Gamma_0$,\ 
\begin{equation}\label{eq:3333}
\rho ({\bm k} ; m) <\frac{1}{C_4}\rho ({\bm g} ; m).
\end{equation}
For any ${\bm k}=(k_1,k_2,k_3) \in \Gamma_0$ and $m \notin V$, we have 
\[
\Xi(\bk,m)=\Xi(\bg,m).
\]
In the rest of the proof of Lemma \ref{lem:V1}, we use notation (\ref{eqn:vector}). 
We take $({\bm a}, {\bm b}) \in \Xi({\bm k},m)$. Recall that $k_1+k_2=g_1+g_2,\ k_3=g_3$ and $k_1<g_1$. 
Moreover, we have $a(1), \cdots, a(k_1+k_2)>0$. 
In fact, if there exists an $i$ with $1 \leq i \leq g_1+g_2$ satisfying $a(i)=0$,\ we get 
\[
m=\left( \sum_{\substack{1 \leq j \leq g_1+g_2 \\ j\neq i}} a(j) \right) + |{\bm b}| \in V,
\]
a contradiction.\ Therefore,\ using (\ref{eq:tt}),\ we obtain
\begin{equation*}
\begin{split}
&t_1({\bm a({\bm k}, 1)})t_2({\bm a({\bm k}, 2)}) t_3({\bm b}) \\
= &\left(\displaystyle \prod_{x=1}^{k_1}t_1(a(x))\right)\left(\displaystyle \prod_{y=k_1+1}^{g_1}t_2(a(y))\right)\left(\displaystyle \prod_{z=g_1+1}^{g_1+g_2}t_2(a(z))\right)t_3({\bm b}) \\
< & \dfrac{1}{{C_4}^{g_1-k_1}}\left(\displaystyle \prod_{x=1}^{k_1}t_1(a(x))\right)\left(\displaystyle \prod_{y=k_1+1}^{g_1}t_1(a(y))\right)\left(\displaystyle \prod_{z=g_1+1}^{g_1+g_2}t_2(a(z))\right)t_3({\bm b}) \\
\leq & \dfrac{1}{C_4}t_1({\bm a({\bm g}, 1)})t_2({\bm a({\bm g}, 2)})t_3({\bm b}).
\end{split}
\end{equation*}
Then
\begin{equation*}
\begin{split}
\rho({\bm k} ;m) & =\sum_{({\bm a} ,{\bm b}) \in \Xi({\bm k},m)} t_1({\bm a({\bm k}, 1)})t_2({\bm a({\bm k}, 2)})t_3({\bm b}) \\
& < \dfrac{1}{C_4}\sum_{({\bm a} ,{\bm b}) \in \Xi({\bm k},m)} t_1({\bm a({\bm g}, 1)})t_2({\bm a({\bm g}, 2)})t_3({\bm b}) \\
& = \dfrac{1}{C_4}\sum_{({\bm a} ,{\bm b}) \in \Xi({\bm g},m)} t_1({\bm a({\bm g}, 1)})t_2({\bm a({\bm g}, 2)})t_3({\bm b}) = \dfrac{1}{C_4}\rho({\bm g} ;m),
\end{split}
\end{equation*}
which implies (\ref{eq:3333}).\ By (\ref{eq:C4}) and (\ref{eq:AB}),\ we get
\begin{equation} \label{eq:BC}
\sum_{{\bm k }\in \Gamma_0} |B_{\bm k} |<\dfrac{1}{2}C_4B_{\bm g}.
\end{equation}
Therefore, by (\ref{eq:22}),\ (\ref{eq:3333}) and (\ref{eq:BC}),\ we obtain
\begin{equation*}
\sum_{{\bm k }\in \Gamma_0} |B_{\bm k} | \rho ({\bm k} ; m)\leq \dfrac{1}{C_4}\sum_{{\bm k }\in \Gamma_0} |B_{\bm k} | \rho ({\bm g} ; m)<\frac{1}{2} B_{\bm g} \rho({\bm g};m).
\end{equation*}
\end{proof}
\noindent Define 
\[{\bm e}:= (0,g_1+g_2-1,1+D), \quad {\bm e'}:= (g_1+g_2-1,1+D).\] 
For simplicity, we set, for $i=1,2$,
\begin{equation*}
\begin{aligned}
&\lambda_i(R):= \lambda(\widetilde{S_i};R), \\
&\theta(R) := \theta((g_1+g_2)\widetilde{S_1};R),
\end{aligned}
\end{equation*}
and
\[
\underline{\lambda}(N)^{\bm{k}}:=\lambda_1(N)^{k_1+k_2}\lambda_2(N)^{k_3}
\]
for each ${\bm k}=(k_1, k_2, k_3) \in \mathbb{N}^{3}$.\ Note that $\displaystyle \lim_{R \rightarrow \infty} \dfrac{\lambda(\widetilde{S_i};R)}{\lambda(S_i;R)}=1$ for $i=1,2$. 
We shall check that $\wi{S_1}$ and $\wi{S_2}$ satisfy analogues of Conditions (1), (2) and (3) as follows:
\begin{itemize}
\item[\rm{(1')}] 
For any sufficiently large  $R$, we have $[R, C_1R) \cap \widetilde{S_2} \neq \emptyset$, where $C_1$ is defined in Condition (1).
\item[\rm{(2')}] 

For every positive real number $\varepsilon$, we have, for $j=1,2$
\begin{align*}
\lambda_j(R)
=o(R^\varepsilon)
\end{align*}
as $R$ tends to infinity.
\item[\rm{(3')}] For any nonnegative integers $a_1,a_2$, 
there exists a positive constant $C_2' =C_2'(a_1,a_2)$, depending only on $a_1,a_2$ 
such that for all $R \geq C_2'$, we have 
$$R-\theta((1+a_1)\widetilde{S_1} ;R)<\frac{R}{\lambda_1(R)^{a_1}\lambda_2(R)^{a_2}}.$$
Since $\wi{S_2}$ is an infinite set, we assume that $C_2'\in \wi{S_2}$ for convenience. 
In particular, we have $C_2'\geq C_5$ by $\wi{S_2}\cap(0,\infty)\subset [C_5,\infty)$.
\end{itemize}
Condition (1') holds for any sufficiently large $R \geq C_5$ by Condition (1).\ Condition (2') is also true by Condition (2). 
We prove Condition (3').\ We check that for any positive integer $R$,
\begin{equation} \label{eq:theta3}
\theta((a_1+1)S_1;R)-(a_1+1)C_5 < \theta((a_1+1)\widetilde{S_1};R) \leq \theta((a_1+1)S_1;R).
\end{equation}
The second inequality of (\ref{eq:theta3}) follows from  $(a_1+1)\widetilde{S_1} \subset (a_1+1)S_1$. \\
We write $\theta((a_1+1)S_1;R)=x_1+ \cdots+x_{a_1+1}$,\ where $x_1, \cdots, x_{a_1+1} \in S_1$. 
If necessary, arranging $x_i$ ($i=1,\ldots,a_1+1$), we may assume the following:  
\begin{equation*}
\begin{aligned}
& x_1\leq x_2 \leq \cdots \leq x_{a_1+1}, \\
& x_1, \cdots ,x_t \geq C_5, \\
& x_{t+1}, \cdots ,x_{a_1+1} < C_5,
\end{aligned}
\end{equation*}
where $t=0$ if there is no $i$ such that $x_i \geq C_5$. 
For $i=1, \cdots, a_1+1$,\ we define $\widetilde{x}_i \in \wi{S_1}$ as
\begin{equation*}
\begin{aligned}
\widetilde{x}_i & := \begin{cases}x_i & (i=1, \cdots, t), \\
0 & (i=t+1, \cdots, a_1+1).\end{cases} \\
\end{aligned}
\end{equation*}
Then,\ we have
\begin{equation*}
(x_1+ \cdots+x_{a_1+1})-(\widetilde{x}_1+ \cdots+\widetilde{x}_{a_1+1}) < (a_1+1)C_5.
\end{equation*}
Note that $\widetilde{x}_1+ \cdots+\widetilde{x}_{a_1+1} \in [0, R) \cap (a_1+1)\widetilde{S_1}$.\ Thus, the maximality of $\theta$ implies that
\begin{equation*}
\begin{aligned}
\theta((a_1+1)\widetilde{S_1};R) &\geq \widetilde{x}_1+ \cdots+\widetilde{x}_{a_1+1} > x_1+ \cdots+x_{a_1+1}-(a_1+1)C_5 \\
&=\theta((a_1+1)S_1;R)-(a_1+1)C_5,
\end{aligned}
\end{equation*}
which leads to the first inequality of (\ref{eq:theta3}).\ By (\ref{eq:theta3}),\ Condition (2') and Condition (3), we get for any sufficiently large $R$ that
\begin{equation*}
\begin{aligned}
R-\theta((a_1+1)\widetilde{S_1};R) &< R-\theta((a_1+1)S_1;R)+(a_1+1)C_5 \\ 
&<\frac{R}{\lambda(S_1;R)^{a_1}\lambda(S_2;R)^{a_2+1}}+(a_1+1)C_5 \\ 
&\ll_{a_1,a_2}\frac{R}{\lambda(\widetilde{S_1};R)^{a_1}\lambda(\widetilde{S_2};R)^{a_2+1}}+(a_1+1)C_5 \\ 
&\ll_{a_1,a_2}\frac{R}{\lambda(\widetilde{S_1};R)^{a_1}\lambda(\widetilde{S_2};R)^{a_2+1}} \\
&=o\left(
\frac{R}{\lambda(\widetilde{S_1};R)^{a_1}\lambda(\widetilde{S_2};R)^{a_2}}
\right)
\end{aligned}
\end{equation*}
as $R$ tends to infinity, which implies Condition (3'). \par 
Since $\displaystyle \lim_{n\to\infty} n\un{\lambda}(n)^{-\be}=\infty $ by Condition (2'), we see that 
\[\mathcal{F}:= \{ N \in \mathbb{N} \mid n\un{\lambda}(n)^{-\be} \leq N\un{\lambda}(N)^{-\be} \ \text{for any} \ 0 \leq n \leq N \}\]
is an infinite set. 
Putting 
\[\lambda(N):=\max\{\lambda_1(N), \lambda_2(N)\},\]
we see for any positive real number $\varepsilon$ that 
\[\lambda(N)=o(N^{\varepsilon})\]
as $N$ tends to infinity from Condition (2').
In what follows, we take a sufficiently large element $N\in\mathcal{F}$. Condition (2') implies that 
\begin{equation}\label{eq:lambda1vsN}
N^\mathcal{C}>24C_7(1+C_1) \lambda(N)^{2D-1},
\end{equation}
where
\begin{equation*}
C_7:= 1+\operatorname{Card}  (\Gamma_1).
\end{equation*}

Define $W_2=W_2(N)$ by 
\begin{equation*}
W_2:= [0,N) \cap \left(\displaystyle \bigcup_{{\bm k}=(k_1, k_2,k_3) \in \Gamma_2} (k_1+k_2)\widetilde{S_{1}}+k_3\widetilde{S_{2}}\right).
\end{equation*}
Observe that if $N$ is sufficiently large, then $0,C_2'(\be')\in W_2$ because $[0,N) \cap \widetilde{S_2}\subset W_2$ by $(0,0,1)\in \Gamma_2$. 
For each $\bk=(k_1,k_2,k_3)\in \Gamma_2$, 
\begin{align*}
    \text{Card} \left([0,N) \cap \big((k_1+k_2)\wi{S_1}+k_3\wi{S_2}\big)
    \right)
    \leq
    \lambda_1(N)^{k_1+k_2} \lambda_2(N)^{k_3}=o\big(\un{\lambda}(N)^{\be}\big)
\end{align*}
as $N$ tends to infinity because $k_1+k_2 \leq g_1+g_2-1$ and $k_3<1+D$. Thus, we obtain 
\begin{equation*} 
\operatorname{Card}(W_2) \leq \sum_{{\bm k} \in \Gamma_2}\lambda_1(N)^{k_1+k_2}\lambda_2(N)^{k_3}\leq \dfrac{1}{32}{\underline{\lambda}(N)^{{\bm e}}}
\end{equation*}
for any sufficiently large $N\in\mathcal{F}$. 
Hence, we may write $W_2$ as follows:
\begin{equation}\label{eq:defW2}
W_2=\left \{0=r_{1}<r_{2}<\cdots<r_{\tau} \right \}, \quad \tau \leq \frac{1}{32}\underline{\lambda}(N)^{{\bm e}}.
\end{equation} 
For convenience,\ we set $r_{\tau+1} = N$, and
$$
\begin{aligned}
\mathcal{K}:=\left\{K=K(k)=\left[r_{k}, r_{1+k}\right) \mid 1 \leq k \leq \tau\right\} .
\end{aligned}
$$
Then $[0,N)$ can be represented as follows:
\begin{equation*} 
[0,N) = \displaystyle \bigcup_{K \in \mathcal{K}}K.
\end{equation*}
For an interval $I=[a,b)\subset\mathbb{R}$, define $|I|=b-a$. Then
\begin{equation} \label{eq:sumK}
\sum_{K \in \mathcal{K}}|K|=N.
\end{equation}
Moreover, define
$$
\begin{aligned}
& \mathcal{K}_{1} :=\left\{K \in \mathcal{K} \mid |K| \geq 16 N \underline{\lambda}(N)^{-{\bm{e}}}\right\}, \\
& \mathcal{K}_{2} :=\left\{K \in \mathcal{K}_{1} \mid K \subset\left[C_{2}'(\bm{e}'), N\right)\right\} .
\end{aligned}
$$
\begin{lem}\label{lem:W2} 
Let $N \in \mathcal{F}$ be sufficiently large. 
Then
\begin{equation*}
\sum_{K \in \mathcal{K}_{1}}|K| \geq \dfrac{N}{2}, \quad \sum_{K \in \mathcal{K}_{2}}|K| \geq \dfrac{N}{3}.
\end{equation*}
In particular, $\K_1$ and $\K_2$ are nonempty. 
\end{lem}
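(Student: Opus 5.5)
We prove both inequalities by a simple counting argument on the partition $\mathcal{K}$ of $[0,N)$. The intervals outside $\mathcal{K}_1$ are few and individually short, so together they cover little of $[0,N)$. The intervals in $\mathcal{K}_1\setminus\mathcal{K}_2$ meet the fixed initial segment $[0,C_2'(\bm e'))$, and all but at most one of them lie inside it.

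For the first inequality we use (\ref{eq:defW2}): the number of intervals in $\mathcal{K}$ is $\tau\le \frac{1}{32}\underline{\lambda}(N)^{\bm e}$. Every $K\in\mathcal{K}\setminus\mathcal{K}_1$ satisfies $|K|<16N\underline{\lambda}(N)^{-\bm e}$. Hence
\[
\sum_{K\in\mathcal{K}\setminus\mathcal{K}_1}|K|<\tau\cdot 16N\underline{\lambda}(N)^{-\bm e}\le \frac{1}{32}\underline{\lambda}(N)^{\bm e}\cdot 16N\underline{\lambda}(N)^{-\bm e}=\frac N2 .
\]
Combining this with (\ref{eq:sumK}) gives $\sum_{K\in\mathcal{K}_1}|K|\ge N/2$. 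The bound on $\tau$ was already established for all sufficiently large $N\in\mathcal{F}$, so nothing more is needed here.

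For the second inequality, consider $K=[r_k,r_{k+1})\in\mathcal{K}_1\setminus\mathcal{K}_2$, that is, $K\not\subset[C_2'(\bm e'),N)$. Since $r_{k+1}\le N$, this forces $r_k<C_2'(\bm e')$. We observed before the lemma that $C_2'(\bm e')\in W_2$ for large $N$, so $C_2'(\bm e')=r_j$ for some $j$. Because the $r_i$ are increasing, $r_k<r_j$ gives $k<j$, and hence $r_{k+1}\le r_j=C_2'(\bm e')$. Thus every such $K$ lies inside $[0,C_2'(\bm e'))$. The intervals in $\mathcal{K}$ are disjoint, so
\[
\sum_{K\in\mathcal{K}_1\setminus\mathcal{K}_2}|K|\le C_2'(\bm e').
\]
Here $C_2'(\bm e')$ is a constant depending only on $\bm e'$, i.e. on $P$, and not on $N$. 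Therefore
\[
\sum_{K\in\mathcal{K}_2}|K|\ge \frac N2-C_2'(\bm e')\ge\frac N3
\]
as soon as $N\ge 6C_2'(\bm e')$. Finally, both sums are positive, so $\mathcal{K}_1$ and $\mathcal{K}_2$ are nonempty.

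The only delicate point is the second step, namely that the intervals in $\mathcal{K}_1\setminus\mathcal{K}_2$ are confined to the bounded segment $[0,C_2'(\bm e'))$. This rests on $C_2'(\bm e')$ being one of the endpoints $r_j$. That in turn uses the convention $C_2'\in\widetilde{S_2}$ together with $[0,N)\cap\widetilde{S_2}\subset W_2$, which holds because $(0,0,1)\in\Gamma_2$. If that membership were unavailable, the fallback would be weaker: at most one interval of $\mathcal{K}_1$ can straddle $C_2'(\bm e')$, and its length would then have to be controlled separately.
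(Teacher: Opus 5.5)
Your proof is correct and follows the paper's argument. The first inequality is exactly the paper's count using $\tau\le\frac{1}{32}\underline{\lambda}(N)^{\bm e}$, and for the second you, like the paper, confine $\mathcal{K}_1\setminus\mathcal{K}_2$ to an initial segment of bounded length by placing a fixed element of $\widetilde{S_2}$ among the endpoints $r_j$. The only difference is cosmetic: the paper uses auxiliary points $N_0<N_1$ of $S_2$ above $C_2'(\bm e')$ and bounds the discarded length by $N_1$, whereas you use the convention $C_2'(\bm e')\in\widetilde{S_2}$ to make $C_2'(\bm e')$ itself an endpoint, which gives the slightly sharper bound $C_2'(\bm e')$.
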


\begin{proof}
From (\ref{eq:defW2}) and (\ref{eq:sumK}), we see 
\begin{equation*}
\begin{split}
\sum_{K \in \mathcal{K}_{1}}|K|&=\sum_{K \in \mathcal{K}}|K|-\sum_{K \in \mathcal{K} \backslash \mathcal{K}_{1}}|K|  \geq N-\sum_{K \in \mathcal{K} \backslash \mathcal{K}_{1}} 16N \underline{\lambda}(N)^{-\bm{e}} \\ 
&\geq N-\tau \cdot 16 N \underline{\lambda}(N)^{-\bm{e}} \geq \dfrac{N}{2}.
\end{split}
\end{equation*}
Choose fixed elements $N_0, N_1\in S_2$ ($N_0 < N_1$) with $N_i > C_2'(\bm{e}')$ for $i=0,1$. 
In what follows, we assume that $N>6 N_1$, and so $N_0,N_1\in W_2$. 
Then there exists some $k_0 = k_0(N) \in \{1, 2, \ldots, \tau \}$ such that
\begin{equation*}
r_{k_{0}}=N_{0}, \quad r_{1+k_{0}} \leq N_{1}.
\end{equation*}
Putting $K(k)=[r_{k}, r_{1+k})$, we get 
\begin{align*}
\sum_{K \in \mathcal{K}_{2}}|K| &= \sum_{K \in \mathcal{K}_{1}}|K|-\sum_{K \in \mathcal{K}_{1} \backslash \mathcal{K}_{2}}|K|  \geq \sum_{K \in \mathcal{K}_{1}}|K|-\sum_{k=1}^{k_{0}}|K(k)| \\ &= \dfrac{N}{2}-(r_{k_{0}+1}-r_{1})  
\geq \dfrac{N}{2}-N_{1} \geq \dfrac{N}{3}.
\end{align*}

\end{proof}
\noindent Next, define $W_1=W_1(N)$ by 
\begin{align}\label{eq:defw1}
W_1:= [0,N) \cap \left(\displaystyle \bigcup_{\bm{k}=(k_1, k_2,k_3) \in \Gamma_1} (k_1+k_2)\widetilde{S_{1}}+k_3\widetilde{S_{2}}\right).
\end{align}
As in the estimate of $\operatorname{Card} (W_2)$, we get by the definition of $\Gamma_1$ and $\bk_1$ that 
\begin{align*}
\operatorname{Card}(W_1) &\leq \sum_{\bm{k} \in \Gamma_1}\lambda_1(N)^{k_1+k_2}\lambda_2(N)^{k_3} 
\leq \sum_{\bm{k} \in \Gamma_1}\lambda_1(N)^{g_1+g_2}\lambda_2(N)^{g_3-1}\\
&= \sum_{\bm{k} \in \Gamma_1} \underline{\lambda}(N)^{{\bm k}_1} <  C_7\underline{\lambda}(N)^{{\bm k}_1}.
\end{align*}
Thus, $W_1$ can be written as follows:
\begin{equation} \label{eq:defW1}
W_1=\left \{0=q_{1}<q_{2}<\cdots<q_{\mu} \right \}, \quad \mu \leq C_7\underline{\lambda}(N)^{\bm{k}_1}
\end{equation}
because $0 \in W_1$ by $0 \in \widetilde{S_1} \cap \widetilde{S_2}$.\ For convenience, we set $q_{\mu+1} = N$,\ and
\begin{equation*}
\mathcal{J}:=\left\{J=J(j)=\left[q_{j}, q_{1+j}\right) \mid 1 \leq j \leq \mu\right\}.
\end{equation*}
Then $[0,N)$ can be represented as follows:
\begin{equation*}
[0,N) = \displaystyle \bigcup_{J \in \mathcal{J}}J.
\end{equation*}
Then we have
\begin{equation*} 
\sum_{J \in \mathcal{J}}|J|=N .
\end{equation*}
Moreover, define
$$
\begin{aligned}
& \mathcal{J}_{1} :=\{J \in \mathcal{J} \mid \text{there exists}\ K \in \mathcal{K} \ \text{such that} \ J \subset K \}, \\
& \mathcal{J}_{2} :=\left\{J \in \mathcal{J}_{1} \relmiddle| |J| \geq \frac{1}{12 C_{7}} N \underline{\lambda}(N)^{-\bm{k}_{1}}\right\}.
\end{aligned}
$$
\begin{lem} \label{lem:W1} 
Let $N \in \mathcal{F}$ be sufficiently large. Then 
\begin{equation*}
\sum_{J \in \mathcal{J}_{1}}|J| \geq \dfrac{N}{6}, \quad \sum_{J \in \mathcal{J}_{2}}|J| \geq \dfrac{N}{12}.
\end{equation*}
In particular, $\mathcal{J}_{1}$ and $\mathcal{J}_{2}$ are nonempty.
\end{lem}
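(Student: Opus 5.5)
We mimic the proof of Lemma \ref{lem:W2}, with the partition $\mathcal{J}$ refining the intervals of $\mathcal{K}_2$.

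\textbf{Step 1: $W_1$ cuts $\mathcal{K}_2$ into few pieces.} Fix $K=[r_k,r_{k+1})\in\mathcal{K}_2$. The intervals $J\in\mathcal{J}$ that meet $K$ but are not contained in it are exactly those containing $r_k$ or $r_{k+1}$. So every point of $K$ lies in some $J\in\mathcal{J}_1$, except possibly the points of $[r_k,q)$, where $q$ is the smallest element of $W_1\cap[r_k,r_{k+1})$, or all of $K$ if this intersection is empty.

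\textbf{Step 2: each $K\in\mathcal{K}_2$ contains an early point of $W_1$.} Since $r_k\ge C_2'(\bm e')$, Condition (3') with $(a_1,a_2)=(g_1+g_2-1,D+1)$ gives
\[
r_k-\theta\bigl((g_1+g_2)\widetilde{S_1};r_k\bigr)<r_k\,\lambda_1(r_k)^{-(g_1+g_2-1)}\lambda_2(r_k)^{-(D+1)}=r_k\underline{\lambda}(r_k)^{-\bm e}\le N\underline{\lambda}(N)^{-\bm e},
\]
where the last inequality uses $N\in\mathcal{F}$.

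We then translate. Take $s\in\widetilde{S_2}$ with $s\le r_k$ (for example $s=0$), and write $r_k=x+y$ with $x\in(k_1+k_2)\widetilde{S_1}$ and $y\in k_3\widetilde{S_2}$ for some $\bm k\in\Gamma_2$. Choosing $\bm k$ so that we can append summands suitably, we want a point of $(g_1+g_2)\widetilde{S_1}+(g_3-1)\widetilde{S_2}\subset W_1$ (note $\bm k_1\in\Gamma_1$) lying in $[r_k,r_k+N\underline{\lambda}(N)^{-\bm e}+C_1(\cdot))$.

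The cleaner route I would try is this. Use Condition (1') to find an element $s'\in\widetilde{S_2}\cap[\,\cdot\,,C_1\cdot)$. Then add to $r_k$ an element of the form
\[
z-\theta\bigl((g_1+g_2)\widetilde{S_1};z\bigr),
\]
adjusting so that the total stays within $(g_1+g_2)\widetilde{S_1}+(g_3-1)\widetilde{S_2}$. This should yield a point of $W_1$ within distance at most $2(1+C_1)N\underline{\lambda}(N)^{-\bm e}$ of $r_k$. Since $|K|\ge16N\underline{\lambda}(N)^{-\bm e}$, we get $q-r_k\le|K|/8$.

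\textbf{Step 3: sum over $\mathcal{K}_2$.} By Step 2,
\[
\sum_{J\in\mathcal{J}_1}|J|\ \ge\ \sum_{K\in\mathcal{K}_2}\frac{7}{8}|K|\ \ge\ \frac{7}{8}\cdot\frac{N}{3}\ \ge\ \frac{N}{6},
\]
using Lemma \ref{lem:W2}. For $\mathcal{J}_2$, the intervals in $\mathcal{J}_1\setminus\mathcal{J}_2$ number at most $\mu\le C_7\underline{\lambda}(N)^{\bm k_1}$ by (\ref{eq:defW1}), and each has length below $\frac{1}{12C_7}N\underline{\lambda}(N)^{-\bm k_1}$. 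Their total length is therefore less than $N/12$, which gives $\sum_{J\in\mathcal{J}_2}|J|\ge N/12$.

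\textbf{Main obstacle.} Step 2 is the hard part: producing an element of $W_1$ close to the left endpoint of each long gap of $W_2$. This is where Conditions (1') and (3') and the choice of $N\in\mathcal{F}$ (comparing $r_k\underline{\lambda}(r_k)^{-\bm e}$ with $N\underline{\lambda}(N)^{-\bm e}$) must interact.

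I expect the actual argument to proceed as follows. Pick $s\in\widetilde{S_2}$ with
\[
r_k+c\,N\underline{\lambda}(N)^{-\bm e}\le s<C_1\bigl(r_k+c\,N\underline{\lambda}(N)^{-\bm e}\bigr).
\]
Then apply (3') at $R=s-r_k$ or a similar shifted argument, so that the resulting sum $\theta(\cdot)+(\text{elements of }\widetilde{S_2})$ lands in $W_1$ just after $r_k$. Bound (\ref{eq:lambda1vsN}) is presumably what absorbs the factor $(1+C_1)$.
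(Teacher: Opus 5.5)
There is a genuine gap. Your Step 1 accounting is wrong, and Step 2, which is the heart of the argument, is never actually carried out.

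\textbf{Step 1 misses the right end of $K$.} You correctly observe that the $J\in\mathcal{J}$ meeting $K=[r_k,r_{k+1})$ but not contained in it are those containing $r_k$ or $r_{k+1}$. You then discard only the left piece $[r_k,q)$. Suppose $W_1\cap K$ is nonempty with largest element $q_b$, $r_{k+1}<N$, and $r_{k+1}\notin W_1$. Then the interval $[q_b,q_{b+1})$ has $q_{b+1}>r_{k+1}$, so the points of $[q_b,r_{k+1})$ lie in no member of $\mathcal{J}_1$. The bound $\frac78|K|$ in Step 3 is therefore unjustified, and you also need an element of $W_1$ just to the left of $r_{k+1}$.

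The first display of your Step 2 is exactly this estimate, only evaluated at the wrong endpoint. Apply Condition (3') with $(a_1,a_2)=\bm e'$ at $R=r_{k+1}$ and use $N\in\mathcal{F}$. This gives $\theta\bigl((g_1+g_2)\widetilde{S_1};r_{k+1}\bigr)>r_{k+1}-N\underline{\lambda}(N)^{-\bm e}\ge r_{k+1}-|K|/16$. This element lies in $(g_1+g_2)\widetilde{S_1}\subset W_1$, since $(g_1+g_2,0,0)\in\Gamma_1$.

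\textbf{Step 2 does not handle the left end.} The translation you sketch does not work. Adding a gap $z-\theta\bigl((g_1+g_2)\widetilde{S_1};z\bigr)$ to $r_k$ does not produce an element of any Minkowski sum. Decomposing $r_k$ via some $\bm k\in\Gamma_2$ gives no usable control. Neither Condition (1') nor (\ref{eq:lambda1vsN}) plays any role here.

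The missing idea is simply to evaluate $\theta$ at a point to the right of $r_k$. Put $E=|K|/4$. Since $N\in\mathcal{F}$, we have $E\ge 4N\underline{\lambda}(N)^{-\bm e}\ge 4r_k\underline{\lambda}(r_k)^{-\bm e}$. Now apply Condition (3') at $R=r_k+E$, using $\underline{\lambda}(r_k)^{\bm e}>4$ to absorb the term $E\underline{\lambda}(r_k)^{-\bm e}$. This gives $r_k+E/2<\theta\bigl((g_1+g_2)\widetilde{S_1};r_k+E\bigr)<r_k+E$, which is Lemma \ref{lem:theta}. Consequently:
\begin{itemize}
\item the least element of $W_1$ exceeding $r_k$ is below $r_k+|K|/4$;
\item the greatest element of $W_1$ below $r_{k+1}$ exceeds $r_{k+1}-|K|/4$;
\item the intervals $J\subset K$ therefore cover length at least $|K|/2$;
\item summing over $\mathcal{K}_2$ and using Lemma \ref{lem:W2} gives $\sum_{J\in\mathcal{J}_1}|J|\ge N/6$.
\end{itemize}
Your derivation of the $\mathcal{J}_2$ bound from the $\mathcal{J}_1$ bound is correct as written.
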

\noindent To prove Lemma \ref{lem:W1},\ we show Lemma \ref{lem:theta}. 

\begin{lem} \label{lem:theta}
Let $M$ and $E$ be any positive real numbers with
\begin{equation*}
M \geq C_{2}'(\bm{e}') \ \text{and} \ E \geq 4 M \underline{\lambda}(M)^{-\bm{e}}.
\end{equation*}
Then
\begin{equation*}
M+\dfrac{E}{2}<\theta(M+E).
\end{equation*}
\end{lem}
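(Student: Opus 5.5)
The plan is to apply Condition (3') once, at the point $R=M+E$, with the exponents chosen so that the right-hand side there is exactly $R\,\underline{\lambda}(R)^{-\bm e}$. Then we show that this quantity is smaller than $E/2$, using monotonicity of $\lambda_1,\lambda_2$ together with the hypothesis on $E$.

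\emph{Step 1 (the choice of exponents).} Take $a_1=g_1+g_2-1$ and $a_2=1+D$, so that $(a_1,a_2)=\bm e'$. Then $1+a_1=g_1+g_2$, hence $\theta((1+a_1)\widetilde{S_1};R)=\theta(R)$, and
\[
\lambda_1(R)^{a_1}\lambda_2(R)^{a_2}=\lambda_1(R)^{g_1+g_2-1}\lambda_2(R)^{1+D}=\underline{\lambda}(R)^{\bm e}.
\]
Since $R:=M+E> M\ge C_2'(\bm e')$, Condition (3') gives
\[
M+E-\theta(M+E)<(M+E)\,\underline{\lambda}(M+E)^{-\bm e}.
\]
It therefore suffices to prove
\[
(M+E)\,\underline{\lambda}(M+E)^{-\bm e}\le \frac{E}{2}.
\]

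\emph{Step 2 (monotonicity and a lower bound).} Each $\lambda_i$ is nondecreasing, so $\underline{\lambda}(M+E)^{\bm e}\ge \underline{\lambda}(M)^{\bm e}$. Hence
\[
M\,\underline{\lambda}(M+E)^{-\bm e}\le M\,\underline{\lambda}(M)^{-\bm e}\le \frac{E}{4},
\]
where the last inequality is the hypothesis $E\ge 4M\underline{\lambda}(M)^{-\bm e}$.

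For the remaining term $E\,\underline{\lambda}(M+E)^{-\bm e}$ we need a uniform lower bound on $\underline{\lambda}(M+E)^{\bm e}$. We use two facts:
\begin{itemize}
\item $0\in\widetilde{S_1}$, so $\lambda_1\ge 1$;
\item $0$ and $C_2'(\bm e')$ both belong to $\widetilde{S_2}$, with $C_2'(\bm e')\ge C_5\ge 1$, and $M+E>C_2'(\bm e')$; hence $\lambda_2(M+E)\ge 2$.
\end{itemize}
Since $D\ge 3$, this gives $\underline{\lambda}(M+E)^{\bm e}\ge 2^{1+D}\ge 16$. Therefore $E\,\underline{\lambda}(M+E)^{-\bm e}\le E/16$.

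\emph{Step 3 (conclusion).} Adding the two bounds from Step 2 gives
\[
(M+E)\,\underline{\lambda}(M+E)^{-\bm e}\le \frac{E}{4}+\frac{E}{16}<\frac{E}{2}.
\]
Combined with the strict inequality from Step 1, this yields $M+E-\theta(M+E)<E/2$, which is $M+E/2<\theta(M+E)$.

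\emph{Main obstacle.} The only delicate point is the edge case $M=C_2'(\bm e')$. There $\underline{\lambda}(M)^{\bm e}$ could be as small as $1$, so one cannot bound $E\,\underline{\lambda}(M)^{-\bm e}$ directly. This is why Step 2 bounds the second term at the point $M+E$ rather than at $M$: the convention $C_2'\in\widetilde{S_2}$, together with $E>0$, guarantees $\lambda_2(M+E)\ge 2$.
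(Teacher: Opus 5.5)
Your proof is correct and follows the paper's argument: you apply Condition (3') at $R=M+E$ with $(a_1,a_2)=\bm e'$, then split $(M+E)\underline{\lambda}(M+E)^{-\bm e}$ into an $M$-part, controlled by monotonicity and the hypothesis on $E$, and an $E$-part, controlled by a lower bound on $\underline{\lambda}^{\bm e}$. The only difference is that the paper gets that lower bound by enlarging $C_2'(\bm e')$ so that $\lambda_2(M)>4$, whereas you obtain $\lambda_2(M+E)\ge 2$ directly from the convention $C_2'(\bm e')\in\widetilde{S_2}$ and use $D\ge 3$, which handles the edge case $M=C_2'(\bm e')$ without re-choosing the constant.
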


\begin{proof}[Proof of Lemma \ref{lem:theta}]
By taking $C_2'(\bm{e}')$ sufficiently large,\ we may assume that $\lambda_{2}(M) > 4$.\ Then we have 
\begin{equation*}
\dfrac{E}{4} \geq M \underline{\lambda}(M)^{-\bm{e}}, \quad \dfrac{E}{4}>E \underline{\lambda}(M)^{-\bm{e}}.
\end{equation*}
Therefore,
\begin{equation} \label{eq:4.4.1}
\dfrac{E}{2} >(M+E) \underline{\lambda}(M)^{-\bm{e}} \geq (M+E) \underline{\lambda}(M+E)^{-\bm{e}}.
\end{equation}
Note that $M+E \geq C_{2}'(\bm{e}')$.\ Using (\ref{eq:4.4.1}) and Condition\,(3') with $(a_1,a_2)=\be',\ R=M+E$,\ we obtain 
\begin{equation*}
\begin{split}
M+E-\theta(M+E) &= M+E-\theta(((g_1+g_2-1)+1) \widetilde{S_{1}}; M+E) \\
                      &< (M+E) \underline{\lambda}(M+E)^{-\bm{e}} < \dfrac{E}{2}. 
\end{split}
\end{equation*}
Thus we obtain $M+\dfrac{E}{2}<\theta(M+E)$.
\end{proof}
\begin{proof} [Proof of Lemma \ref{lem:W1}]
Take any $K=\left[r_{k}, r_{1+k}\right) \in \mathcal{K}_{2}$.\ Consider $J\in \J$ such that $J\subset K$.  
We have by the definition of $\mathcal{K}_2$ that $C_{2}'(\bm{e}') \leq r_{k}<r_{1+k} \leq N$ and $|K| \geq 16 N \underline{\lambda}(N)^{-\bm{e}}$. 
Moreover,\ by the definition of $\mathcal{F}$,
\begin{equation} \label{eq:L1}
\dfrac{|K|}{4} \geq 4 N \underline{\lambda}(N)^{-\bm{e}} \geq 4 r_{1+k} \underline{\lambda}(r_{1+k})^{-\bm{e}} \geq r_{1+k} \underline{\lambda}\left(r_{1+k}\right)^{-\bm{e}}.
\end{equation}
Using (\ref{eq:L1}) and Condition\,(3') with $(a_1,a_2)=\be',\ R=r_{1+k}$,\ we obtain
\begin{equation*}
r_{1+k}>\theta\left(r_{1+k}\right) > r_{1+k}-r_{1+k}\underline{\lambda}\left(r_{1+k}\right)^{-\bm{e}} \geq r_{1+k}-\dfrac{|K|}{4}.
\end{equation*}
Note by $(g_1+g_2,0,0)\in \Gamma_1$ that 
\[\theta\left(r_{1+k}\right) \in (g_1+g_2)\widetilde{S_{1}}+0\wi{S_2}\subset W_1. \] 
On the other hand,\ by the definition of $\mathcal{K}_{2}$ and $\mathcal{F}$,\ we have
\begin{equation*}
\dfrac{|K|}{4} \geq 4 N \underline{\lambda}(N)^{-\bm{e}} \geq 4 r_{k} \underline{\lambda}\left(r_{k}\right)^{-\bm{e}}.
\end{equation*}
Thus, we can apply Lemma \ref{lem:theta} with $M=r_{k}$ and $E=\dfrac{|K|}{4}$. 
Hence, we obtain
\begin{equation*}
r_k < r_{k}+\dfrac{|K|}{8}<\theta\left(r_{k}+\dfrac{|K|}{4}\right)<r_{k}+\dfrac{|K|}{4}\big(<\theta(r_{1+k})\big),
\end{equation*}
and $\theta\left(r_{k}+\dfrac{|K|}{4}\right) \in (g_1+g_2)\widetilde{S_{1}}\subset W_1$.
Defining 
\begin{equation*}
\begin{split}
& \beta(K):=\min \left\{n \in \mathbb{N} \mid r_{k}<n \ \text{and} \ n \in W_1 \right\}, \\
& \gamma(K):=\max \left\{n \in \mathbb{N} \mid n<r_{1+k} \ \text{and} \ n \in W_1 \right\},
\end{split}
\end{equation*}
we have the following inequalities: 
\begin{align} \label{eq:4.8.1}
&\beta(K) \leq \theta\left(r_{k}+\dfrac{|K|}{4}\right)<r_{k}+\dfrac{|K|}{4}.\\
\label{eq:4.8.2}
&\gamma(K) \geq \theta\left(r_{1+k}\right)>r_{1+k}-\dfrac{|K|}{4}.
\end{align}
Also,\ we have 
\begin{equation*}
\bigcup_{J \in \mathcal{J}, J \subset K}J=[\beta(K),\gamma(K)), 
\end{equation*}
and so 
\begin{equation} \label{eq:4.8.3}
\sum_{J \in \mathcal{J}, J \subset K}|J|=\gamma(K)-\beta(K).
\end{equation}
Therefore, by combining (\ref{eq:4.8.1}), (\ref{eq:4.8.2}) and (\ref{eq:4.8.3}),
\begin{equation}\label{eq:4.8.4}
\begin{split}
\sum_{J \in \mathcal{J}, J \subset K}|J| &= \gamma(K)-\beta(K)  \geq \left(r_{1+k}-\dfrac{|K|}{4}\right)-\left(r_{k}+\dfrac{|K|}{4}\right) \\
&=|K|-\dfrac{|K|}{2}=\dfrac{|K|}{2}. 
\end{split}
\end{equation}
By Lemma \ref{lem:W2},\ (\ref{eq:4.8.4}) and the definition of $\mathcal{J}_1$,\ we obtain
\begin{equation*}
\sum_{J \in \mathcal{J}_{1}}|J| \geq \sum_{K \in \mathcal{K}_{2}} \sum_{J \in \mathcal{J}, J \subset K}|J| \geq \frac{1}{2} \sum_{K \in \mathcal{K}_{2}}|K| \geq \frac{N}{6}.
\end{equation*}
Finally,\ we evaluate $\displaystyle \sum_{J \in \mathcal{J}_{2}}|J|$.\ By (\ref{eq:defW1}) and the first inequality of Lemma \ref{lem:W1},\ we obtain
\begin{equation*}
\sum_{J \in \mathcal{J}_{2}}|J|=\sum_{J \in \mathcal{J}_{1}}|J|-\sum_{J \in \mathcal{J}_{1} \backslash \mathcal{J}_{2}}|J| \geq \frac{1}{6} N-\mu \cdot \frac{1}{12 C_{7}} N \underline{\lambda}(N)^{-\bm{k}_{1}} \geq \frac{N}{12}.
\end{equation*}
\end{proof}
\noindent Next, we take an interval $J_1 = [q_y,q_{1+y})\in \J_2$ satisfying 
\begin{equation*}
|J_1| = \max_{J \in \mathcal{J}_2} |J|.
\end{equation*}
By the definition of $\J_2$, it follows from $[0,N)\cap V=W_1\cup W_2$ that 
\begin{align}
    \label{eq:interJ1}
    (q_y,q_{1+y})\cap V=\emptyset .
\end{align}
We give a lower bound for $|J_1|$.
By Lemma \ref{lem:W1},\ we get
\begin{align}
\frac{N}{12} &\leq \sum_{J \in \mathcal{J}_2} |J|  \leq \mu \max_{J \in \mathcal{J}_2} |J|  
\leq |J_1| C_7 \underline{\lambda} (N)^{{\bm k}_1}\nonumber\\
&\leq |J_1| C_7 \lambda(N)^{g_1+g_2+g_3-1} 
\leq |J_1| C_7 \lambda(N)^{D-1}. \label{eq:J1vsN}
\end{align}
Moreover,\ by (\ref{eq:lambda1vsN}),
\begin{equation*}
\lambda(N)^{D-1}=\lambda(N)^{2D-1}\lambda(N)^{-D}< \frac{1}{24C_7(1+C_1)}N^\mathcal{C}\lambda(N)^{-D}.
\end{equation*}
Therefore,\ for any sufficiently large $N\in \mathcal{F}$,
\begin{equation} \label{eq:j1length}
N^{1-\mathcal{C}} \leq |J_1| \frac{1}{2(1+C_1)} \lambda(N)^{-D} <\frac{|J_1|}{1+C_1}.
\end{equation}
In particular, if $N\in \mathcal{F}$ is sufficiently large, then there exists an integer $\theta_0$ satisfying
\begin{equation}\label{eq:theta0}
\theta_0 \in \bigg[ \frac{|J_1|}{1+C_1}, \frac{C_1|J_1|}{1+C_1} \bigg) \cap \widetilde{S_{2}}.
\end{equation}
by Condition\,(1').\ Now we define $M$ and  $J_2$ as
\begin{align*}
M&:= q_y+ \theta_0 \in \bigg[ q_y+\frac{|J_1|}{1+C_1}, q_y+\frac{C_1|J_1|}{1+C_1} \bigg),\\
J_2 &:= [q_y,M) \subset [q_y, q_{1+y})=J_1, 
\end{align*}
respectively.\ Note that there exists ${\bm k}=(k_1,k_2,k_3) \in \Gamma_1$ satisfying 
\begin{equation*}
M = q_y+ \theta_0 \in \bigg((k_1+k_2) \widetilde{S_{1}}+ k_3 \widetilde{S_{2}} \bigg) + \widetilde{S_{2}} 
\subset (g_1+g_2)\widetilde{S_{1}}+g_3 \widetilde{S_{2}}.
\end{equation*}
Lemma \ref{lem:Minkowski} implies that 
\begin{equation}\label{eq:rhoGM}
\rho({\bm g} ; M) \geq 1.
\end{equation}

\begin{lem} \label{lem:YR0}
Suppose that $N \in \mathcal{F}$ is sufficiently large.\ Then for any $R \in J_2$,\
$$
Y_R>0.
$$
\end{lem}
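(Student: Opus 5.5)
Fix $R\in J_2=[q_y,M)$. We split $Y_R=\sum_{m\ge1}\beta^{-m}c(m+R)$, where $c(n):=\sum_{\bm k\in\Gamma}B_{\bm k}\rho(\bm k;n)$, according to whether $m+R<q_{1+y}$ or not. The plan is to show that the range $R<n<q_{1+y}$ contributes a positive main term, coming from $n=M$, and that the tail $n\ge q_{1+y}$ is negligible by comparison.

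\emph{Main range.} By (\ref{eq:interJ1}), every $n\in(q_y,q_{1+y})$ satisfies $n\notin V$. Lemma \ref{lem:V1} then gives
\[
c(n)\ge B_{\bm g}\rho(\bm g;n)-\sum_{\bm k\ne\bm g}|B_{\bm k}|\rho(\bm k;n)\ge \tfrac12 B_{\bm g}\rho(\bm g;n)\ge0 .
\]
Here $B_{\bm g}=\beta^{C_5D}A_{\bm g}>0$ by Lemma \ref{lem:AB}. Hence all terms with $R<n<q_{1+y}$ are nonnegative. Since $R<M<q_{1+y}$, the term $n=M$ is among them, and by (\ref{eq:rhoGM}) it is at least $\tfrac12 B_{\bm g}\beta^{-(M-R)}$. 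Because $M-R\le M-q_y<C_1|J_1|/(1+C_1)$, the main part is bounded below by $\tfrac12\beta^{C_5D}\beta^{-C_1|J_1|/(1+C_1)}$.

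\emph{Tail.} For $n\ge q_{1+y}$, we have $m=n-R\ge q_{1+y}-M>|J_1|/(1+C_1)$, using $M<q_y+C_1|J_1|/(1+C_1)$. Using only absolute values, $|c(n)|\le\sum_{\bm k}|B_{\bm k}|\rho(\bm k;n)$, and Lemma \ref{lem:logrho} gives $\log^+\rho(\bm k;n)=o(n^{1-2\mathcal C})$. Since $n\le m+N$ and $N^{1-\mathcal C}<|J_1|/(1+C_1)<m$ by (\ref{eq:j1length}), we get $n^{1-2\mathcal C}\ll (m+N)^{1-2\mathcal C}=o(m)$, because $N\le m^{1/(1-\mathcal C)}$ and $(1-2\mathcal C)/(1-\mathcal C)<1$. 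Thus each tail term is at most $\beta^{-m(1-o(1))}$, and the tail is $\ll \beta^{-(1-\delta)|J_1|/(1+C_1)}$ for any fixed small $\delta>0$ once $N$ is large.

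\emph{Comparison.} The main obstacle is comparing these two exponentials: the main term has exponent $C_1|J_1|/(1+C_1)$, which is \emph{larger} than the tail exponent $|J_1|/(1+C_1)$. So the estimate must be arranged differently. Instead of using the single term at $M$, I would use the first term at which $\rho(\bm g;n)\ge1$ after $R$, or choose $R$ close to $M$. Since positivity is required for all $R\in J_2$, the right choice is to take the lower bound from $n=M$, which gives exponent $M-R\le M-q_y$, and the tail bound with exponent $q_{1+y}-R\ge q_{1+y}-M$. So one must compare $M-R$ with $q_{1+y}-R$, not with worst-case quantities. The gap between them is $q_{1+y}-M>|J_1|/(1+C_1)\ge N^{1-\mathcal C}$, while the subexponential factors are $\beta^{o(N^{1-2\mathcal C})}$. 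The total is therefore
\[
\beta^{-(M-R)}\Bigl(\tfrac12 B_{\bm g}-\sum_{n\ge q_{1+y}}\beta^{-(n-M)}|c(n)|\Bigr),
\]
and the subtracted sum is at most $\sum_{j\ge0}\beta^{-(q_{1+y}-M+j)+o((N+j)^{1-2\mathcal C})}$. Since $q_{1+y}-M\ge N^{1-\mathcal C}$ dominates $o(N^{1-2\mathcal C})$, and $o((N+j)^{1-2\mathcal C})$ is dominated by $j/2$ for large $j$, this sum tends to $0$ as $N\to\infty$ in $\mathcal F$. It is therefore less than $\tfrac12 B_{\bm g}$, so $Y_R>0$ for every $R\in J_2$.
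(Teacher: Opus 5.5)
Your final argument is correct and is essentially the paper's proof. The paper shows $Y_{M-1}>0$ by comparing the term $\tfrac12\beta^{-1}B_{\bm g}\rho(\bm g;M)$ with the tail beyond $q_{1+y}$, which is small because $q_{1+y}-M>N^{1-\mathcal{C}}$. It then propagates positivity down to $q_y$ via $Y_{R-1}=\beta^{-1}\sum_{\bm k}B_{\bm k}\rho(\bm k;R)+\beta^{-1}Y_R$, which is exactly your factoring out of $\beta^{-(M-R)}$, written as an induction. The worst-case comparison in your ``Tail'' paragraph is unusable, as you noticed, and should simply be deleted.
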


\begin{proof}
We use induction on $R$.\ First we consider the case of $R=M-1$.\ Observe that
\begin{equation*}
\begin{aligned}
Y_{M-1} &= \sum_{{\bm k} \in \Gamma} B_{{\bm k}} \sum_{m=1}^\infty \beta^{-m} \rho({\bm k};m+M-1) \\
&=\sum_{m=1}^{q_{1+y}-M} \beta^{-m} \sum_{{\bm k} \in \Gamma} B_{{\bm k}}  \rho({\bm k};m+M-1) \\
&+ \sum_{{\bm k} \in \Gamma} B_{{\bm k}} \sum_{m=1+q_{1+y}-M}^\infty \beta^{-m} \rho({\bm k};m+M-1)\\
&=:S_1+S_2.
\end{aligned}
\end{equation*}
First, we give the upper bound for $|S_2|$.\ By (\ref{eq:j1length}) and (\ref{eq:theta0}),\ we get
\begin{equation} \label{eq:q1y}
\begin{aligned}
q_{1+y}-M &=q_{1+y}-q_y - \theta_0 =|J_1| - \theta_0 \\
&\geq |J_1| - \frac{C_1|J_1|}{1+C_1}=\frac{|J_1|}{1+C_1}>N^{1-\mathcal{C}}
\end{aligned}
\end{equation}
for sufficiently large $N \in \mathcal{F}$. Putting
\begin{equation*}
\zeta_n := \beta^{n^{1-2\mathcal{C}}},
\end{equation*}
we get by Lemma \ref{lem:logrho} for any ${\bm k} \in \Gamma$ that  
\begin{equation} \label{eq:zetaev}
\rho({\bm k} ; m) \ll \zeta_m.
\end{equation}
Observing by the mean value theorem and $0<2\mathcal{C}<1$ that  
\begin{equation*}
\lim_{m \to \infty}\big((m+1)^{1-2\mathcal{C}} - m^{1-2\mathcal{C}} \big) =0,
\end{equation*}
we have 
\begin{equation*}
\lim_{m \to \infty} \frac{\zeta_{m+1}}{\zeta_m} =1.
\end{equation*}
Therefore,\ for sufficiently large $m$,\ we obtain
\begin{equation} \label{eq:zeta3}
\frac{\zeta_{m+1}}{\zeta_m}<\frac{\beta +1}{2}.
\end{equation}
By 
\begin{align*}
&\lim_{N\to\infty}\log_{\beta}\left(
\beta^{-N^{1-\mathcal{C}}}\zeta_{2N}\beta^{2N^{1-2\mathcal{C}}}
\right)\\
&=\lim_{N\to\infty}\left(
-N^{1-\mathcal{C}}+(2N)^{1-2\mathcal{C}}+2N^{1-2\mathcal{C}}
\right)=-\infty,
\end{align*}
we see 
\begin{align}\label{eq:AAAB}
    \beta^{-N^{1-\mathcal{C}}}\zeta_{2N}=o\left(
    \beta^{-2N^{1-2\mathcal{C}}}
    \right)
\end{align}
as $N$ tends to infinity. 
Thus by (\ref{eq:q1y}), (\ref{eq:zetaev}), (\ref{eq:zeta3}) and (\ref{eq:AAAB}), 
\begin{equation} \label{eq:J2YR1}
\begin{aligned}
|S_2| &\leq \sum_{{\bm k} \in \Gamma} |B_{{\bm k}} | \sum_{m=1+q_{1+y}-M}^\infty \beta^{-m} \rho({\bm k};m+M-1) \\
& \ll \sum_{m=1+q_{1+y}-M}^\infty \beta^{-m} \zeta_{m+M-1} \\
& \leq \sum_{m> N^{1-\mathcal{C}}} \beta^{-m} \zeta_{m+N} \\ 
& = \sum_{m=0}^\infty  \beta^{- \big( m +\lfloor N^{1-\mathcal{C}} \rfloor +1\big)} \zeta_{m +\lfloor N^{1-\mathcal{C}} \rfloor +1+N} \\ 
& < \beta^{-\big(\lfloor N^{1-\mathcal{C}} \rfloor +1\big)} \zeta_{\lfloor N^{1-\mathcal{C}} \rfloor +1+N} \sum_{m=0}^\infty \beta^{-m}\bigg( \frac{\beta+1}{2} \bigg)^{m} \\
& \ll \beta^{-N^{1-\mathcal{C}}} \zeta_{2N} =o(1)
\end{aligned}
\end{equation}
as $N \in \mathcal{F}$ tends to infinity. \par 
Next, we give the lower bound for $S_1$.\ For any integer $m$ with $1\leq m \leq q_{1+y} -M$,\ we have
\begin{equation*}
q_y <M \leq m+M-1 \leq q_{1+y}-1 <q_{1+y},
\end{equation*}
so we see $m+M-1 \notin V$ by (\ref{eq:interJ1}). Therefore,\ by Lemma \ref{lem:V1} and (\ref{eq:rhoGM}),\ we obtain
\begin{equation} \label{eq:J2YR2}
\begin{aligned}
S_1 &\geq \sum_{m=1}^{q_{1+y}-M} \beta^{-m} \bigg( B_{\bm g} \rho({\bm g} ; m+M-1) -
\sum_{{\bm k} \in \Gamma \setminus \{ {\bm g} \}}|B_{\bm k}| \rho({\bm k} ; m+M-1) \bigg)\\
& \geq \frac{1}{2}B_{\bm g} \sum_{m=1}^{q_{1+y}-M}\beta^{-m}\rho({\bm g} ; m+M-1) \\
&\geq \frac{1}{2} \beta^{-1} B_{\bm g} \rho({\bm g};M) \geq \frac{1}{2} B_{\bm g} \beta^{-1} 
\end{aligned}
\end{equation}
Therefore, by (\ref{eq:J2YR1}) and (\ref{eq:J2YR2}),\ we get 
\begin{equation*}
Y_{M-1}>0
\end{equation*}
for sufficiently large $N \in \mathcal{F}$. \par
Next we assume that $Y_R>0$ for some $R\in (q_y,M)$.\ Then
\begin{equation*}
\begin{split}
Y_{R-1} &= \sum_{{\bm k} \in \Gamma} B_{{\bm k}} \sum_{m=1}^\infty \beta^{-m} \rho({\bm k};m+R-1) \\
&=\frac{1}{\beta}\sum_{{\bm k} \in \Gamma} B_{{\bm k}}  \rho({\bm k};R) 
+ \frac{1}{\beta}\sum_{{\bm k} \in \Gamma} B_{{\bm k}} \sum_{m=2}^\infty \beta^{-(m-1)} \rho({\bm k};m+R-1)\\
&=:S_1'+S_2'.
\end{split}
\end{equation*}
First, we give the lower bound for $S_1'$. 
Note by (\ref{eq:interJ1}) that $R \notin V$ for each $R \in (q_y,M)$. 
Lemma \ref{lem:V1} implies 
\begin{equation} \label{J2YRcase2-1}
S_1' \geq \frac{1}{\beta} \bigg( B_{\bm g} \rho({\bm g} ; R) -
\sum_{{\bm k} \in \Gamma \setminus \{ {\bm g} \}}|B_{\bm k}| \rho({\bm k} ; R) \bigg) \geq \frac{1}{2\beta} B_{\bm g}\rho({\bm g};R) \geq 0.
\end{equation}
Second,\ we give the lower bound for $S_2'$.\ By the induction hypothesis,\ we see that
\begin{equation}\label{J2YRcase2-2}
S_2' = \frac{1}{\beta}\sum_{{\bm k} \in \Gamma} B_{{\bm k}} \sum_{m=1}^\infty \beta^{-m} \rho({\bm k};m+R) =\frac{1}{\beta} Y_R>0.
\end{equation}
Hence,\ by (\ref{J2YRcase2-1}) and (\ref{J2YRcase2-2}),\ we get $Y_{R-1}>0$.
\end{proof}
Now we put
\begin{equation*}
J_2 \cap \big( (g_1+g_2)\widetilde{S_{1}} + g_3 \widetilde{S_{2}} \big) =: \{q_y=p_1 < p_2 <\cdots < p_\kappa \},
\end{equation*}
where
\begin{equation*}
\kappa \leq \lambda_1(N)^{g_1+g_2} \lambda_2(N)^{g_3} \leq \lambda(N)^D
\end{equation*}
for any sufficiently large $N\in \mathcal{F}$.
For convenience,\ set $p_{1+\kappa} := M$.\ Then there exists an integer $x$ with $1 \leq x \leq \kappa$ such that
\begin{equation*}
p_{1+x} -p_x = \max_{1 \leq i \leq \kappa } \{ p_{1+i} -p_i \}.
\end{equation*}
Put $I_1:= [p_x,p_{1+x})$.\ For any ${\bm k}=(k_1,k_2,k_3) \in \Gamma$,
\begin{equation}\label{eq:interI1}
(p_x,p_{1+x}) \cap \bigg((k_1+k_2) \widetilde{S_{1}} + k_3\widetilde{S_{2}} \bigg) = \emptyset.
\end{equation}
We give a lower bound for $|I_1|$.\ We define $\mathcal{I}$ as
\begin{equation*}
\mathcal{I}:= \{ I=I(i) = [p_i, p_{1+i}) \mid 1 \leq i \leq \kappa \}.
\end{equation*}
We see that
\begin{equation} \label{eq:J2ev}
|J_2| 
= \sum_{i=1}^\kappa |I(i)| \leq \kappa |I_1| \leq \lambda(N)^D|I_1|.
\end{equation}
Combining (\ref{eq:lambda1vsN}), (\ref{eq:J1vsN}), (\ref{eq:J2ev}), and the definition of $M$, 
we obtain
\begin{equation} \label{eq:I1length}
\begin{aligned}
|I_1| &\geq \frac{|J_2|}{\lambda(N)^D} = \frac{M-q_y}{\lambda(N)^D} \\
& \geq \frac{1}{\lambda(N)^D} \cdot \frac{|J_1|}{1+C_1} \\
& \geq \frac{N}{12C_7(1+C_1)\lambda(N)^{2D-1}} \\
& > 2N^{1-\mathcal{C}}.
\end{aligned}
\end{equation}
\begin{lem}\label{lem:I1YR}
Suppose that $N \in \mathcal{F}$ is sufficiently large.\ Then, for any nonnegative integer $R$ with
$$p_x \leq R < p_{x} + \frac{1}{2}|I_1|,$$
we have
$$0<Y_R<\beta^{-R^{1-2\mathcal{C}}}.$$
\end{lem}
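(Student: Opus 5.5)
We split $Y_R$ at the right endpoint of $I_1$, in the same way as in the proof of Lemma \ref{lem:YR0}. Fix $R$ with $p_x\le R<p_x+\frac12|I_1|$ and write
\[
Y_R=\sum_{m=1}^{p_{1+x}-R}\beta^{-m}\sum_{\bk\in\Gamma}B_{\bk}\rho(\bk;m+R)+\sum_{\bk\in\Gamma}B_{\bk}\sum_{m=1+p_{1+x}-R}^{\infty}\beta^{-m}\rho(\bk;m+R)=:T_1+T_2 .
\]
For $1\le m\le p_{1+x}-R-1$ we have $p_x<m+R<p_{1+x}$. By (\ref{eq:interI1}) and Lemma \ref{lem:Minkowski}, $\rho(\bk;m+R)=0$ for every $\bk\in\Gamma$. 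Hence only the single term $m=p_{1+x}-R$ can contribute to $T_1$.

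\emph{Upper bound.} Since $p_{1+x}-R>\frac12|I_1|>N^{1-\mathcal C}$ by (\ref{eq:I1length}), the surviving part of $Y_R$ starts at the index $p_{1+x}-R>N^{1-\mathcal C}$. Repeating the estimate (\ref{eq:J2YR1}) with (\ref{eq:zetaev}) and (\ref{eq:zeta3}), and using $m+R\le m+N$, we get
\[
|Y_R|\ll\beta^{-N^{1-\mathcal C}}\zeta_{2N}=o\big(\beta^{-2N^{1-2\mathcal C}}\big)
\]
by (\ref{eq:AAAB}). Since $R<N$, this is smaller than $\beta^{-R^{1-2\mathcal C}}$ for large $N\in\mathcal F$. (For the one term $m=p_{1+x}-R$ we can use the same bound, since $p_{1+x}-R>N^{1-\mathcal C}$.)

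\emph{Positivity.} Here $I_1\subset J_2\subset J_1$, so $R\in J_2$. If $R>q_y$, or if $R=q_y=p_1$, then $R\in J_2=[q_y,M)$ and Lemma \ref{lem:YR0} gives $Y_R>0$ directly. We still have to check that the whole range $p_x\le R<p_x+\frac12|I_1|$ lies in $J_2$. This holds because $p_x+\frac12|I_1|<p_{1+x}\le M$ and $p_x\ge q_y$.

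So the lemma is simply Lemma \ref{lem:YR0} combined with a tail estimate. The only point needing care is the main obstacle: verifying that the gap $p_{1+x}-R$ exceeds $N^{1-\mathcal C}$ (this is (\ref{eq:I1length}) with the factor $\frac12$) and that $R^{1-2\mathcal C}\le N^{1-2\mathcal C}$, so that (\ref{eq:AAAB}) beats $\beta^{-R^{1-2\mathcal C}}$.
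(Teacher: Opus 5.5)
Your proposal is correct and is essentially the paper's proof. Positivity comes from Lemma \ref{lem:YR0} once you check $[p_x,p_x+\frac12|I_1|)\subset J_2$. For the upper bound, $\rho(\cdot\,;m+R)$ vanishes on $(p_x,p_{1+x})$ by (\ref{eq:interI1}), and the remaining tail, which begins beyond $N^{1-\mathcal{C}}$ by (\ref{eq:I1length}), is controlled by (\ref{eq:zetaev}), (\ref{eq:zeta3}), (\ref{eq:AAAB}) and $R<N$.
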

\begin{proof}
By Lemma \ref{lem:YR0},\ it suffices to show that $|Y_R|<\beta^{-R^{1-2\mathcal{C}}}$. 
By (\ref{eq:I1length}),\ we see that
\begin{equation}\label{eq:last1}
p_{1+x} -R > p_{1+x} -p_x -\frac{1}{2}|I_1| = \frac{1}{2}|I_1| > N^{1-\mathcal{C}}.
\end{equation}
Let $m$ be an integer with $1\leq m \leq p_{1+x} -R-1$.
Since $m+R \in (p_x,p_{1+x})$, we see by (\ref{eq:interI1}) that 
\begin{equation*}
\rho({\bm k};m+R) =0
\end{equation*}
for any ${\bm k} \in \Gamma$.\ Thus by (\ref{eq:zetaev}),
\begin{align}
|Y_R| &\leq \sum_{{\bm k} \in \Gamma} |B_{\bm k}| \sum_{m=p_{1+x} -R}^\infty \beta^{-m} \rho({\bm k};m+R) \nonumber\\
& \ll \sum_{{\bm k} \in \Gamma} |B_{\bm k}| \sum_{m=p_{1+x} -R}^\infty \beta^{-m} \zeta_{m+R} \nonumber\\
& \ll \sum_{m=p_{1+x} -R}^\infty \beta^{-m} \zeta_{m+N}.\label{eq:last2}
\end{align}
Combining (\ref{eq:zeta3}), (\ref{eq:AAAB}), (\ref{eq:last1}) and (\ref{eq:last2}), we deduce that 
\begin{equation*}
|Y_R| \ll \sum_{m>N^{1-\mathcal{C}}} \beta^{-m} \zeta_{m+N} 
\ll \beta^{-N^{1-\mathcal{C}}}\zeta_{2N}\sum_{m=0}^{\infty}\beta^{-m}\left(\frac{\beta+1}2\right)^m
\ll \beta^{-2N^{1-2\mathcal{C}}}.
\end{equation*}
Hence, there exists a positive constant $C_8$ such that
\begin{equation*}
|Y_R| \leq C_8\beta^{-2N^{1-2\mathcal{C}}} = \frac{C_8}{\beta^{N^{1-2\mathcal{C}}}}\beta^{-N^{1-2\mathcal{C}}} < \beta^{-N^{1-2\mathcal{C}}} \leq \beta^{-R^{1-2\mathcal{C}}} 
\end{equation*}
for sufficiently large $N\in \mathcal{F}$,\ which implies Lemma \ref{lem:I1YR}.
\end{proof}

Finally, we prove Theorem \ref{mainThm}. 

\section*{Acknowledgments}
The first author was supported by the JSPS KAKENHI Grant Number 24K06641. The second and third authors were supported by JST SPRING, Grant Number JPMJSP2124.

\bibliographystyle{plain}
\bibliography{myref_revised}
\end{document}